\newcommand{\eat}[1]{}
\newcommand{\R}{{\mathbb R}}
\newcommand{\ns}[1]{\| #1 \|^2}
\newcommand{\n}[1]{\| #1 \|}
\par\vspace{4mm}}
\newenvironment{proofof}[1]{\smallskip\noindent{\em Proof of #1.}}%
{\hspace*{\fill}$\Box$\par}
\newenvironment{proof}{\vspace{-0.05in}\noindent{\em Proof.}}%
{\hspace*{\fill}$\Box$\par}
\newcommand{\Am}{\mathsf{AA}}
\newcommand{\Gm}{\mathsf{GMRES}}
\newcommand*{\QEDB}{\hfill\ensuremath{\square}}
\newtheorem{theorem}{Theorem}
\newtheorem{lemma}{Lemma}
\newtheorem{corollary}{Corollary}
\newtheorem{definition}{Definition}
\newtheorem{assumption}{Assumption}
\begin{document}

%
\runningtitle{A Fast Anderson-Chebyshev Acceleration for Nonlinear Optimization}

%
\runningauthor{Zhize Li, Jian Li}

\twocolumn[

\aistatstitle{A Fast Anderson-Chebyshev Acceleration for \\ Nonlinear Optimization}

\aistatsauthor{ Zhize Li \And Jian Li }
\aistatsaddress{ 
	        King Abdullah University of Science and Technology
					 \And    
					Tsinghua University } ]

\begin{abstract}
  \emph{Anderson acceleration} (or Anderson mixing) is an efficient acceleration method for fixed point iterations $x_{t+1}=G(x_t)$, e.g., gradient descent can be viewed as iteratively applying the operation $G(x) \triangleq x-\alpha\nabla f(x)$.
  It is known that Anderson acceleration is quite efficient in practice and can be viewed as an extension of Krylov subspace methods for nonlinear problems.
  In this paper, we show that Anderson acceleration with Chebyshev polynomial can achieve the optimal convergence rate $O(\sqrt{\kappa}\ln\frac{1}{\epsilon})$, which improves the previous result $O(\kappa\ln\frac{1}{\epsilon})$ provided by \citep{toth2015convergence} for quadratic functions.
  Moreover, we provide a convergence analysis for minimizing general nonlinear problems.
  Besides, if the hyperparameters (e.g., the Lipschitz smooth parameter $L$) are not available, we propose a \emph{guessing algorithm} for guessing them dynamically and also prove a similar convergence rate.
  Finally, the experimental results demonstrate that the proposed Anderson-Chebyshev acceleration method converges significantly faster than other algorithms, e.g., vanilla gradient descent (GD), Nesterov's Accelerated GD.
  Also, these algorithms combined with the proposed guessing algorithm (guessing the hyperparameters dynamically) achieve much better performance.
\end{abstract}

\section{Introduction}
Machine learning problems are usually modeled as optimization problems, ranging from convex optimization to highly nonconvex optimization such as deep neural networks, e.g.,  \citep{nesterov2014introductory, bubeck2015convex, lecun2015deep, lei2017non, li2018simple, fang2018spider, zhou2018stochastic, li2019stochastic, ge2019stable, li2019ssrgd}. 
To solve an optimization problem $\min_x f(x)$, the classical method is gradient descent, i.e.,  $x_{t+1}= x_t-\alpha_t\nabla f(x_t)$.
There exist several techniques to accelerate the standard gradient descent, e.g., momentum \citep{nesterov2014introductory, allen2017katyusha, lan2018random, lan2019unified}.
There are also various vector sequence acceleration methods developed
in the numerical analysis literature, e.g., \citep{brezinski2000convergence,sidi1986acceleration,smith1987extrapolation,brezinski1991extrapolation,brezinski2018shanks}.
Roughly speaking, if a vector sequence converges very slowly to its limit,
then one may apply such methods to accelerate the convergence of this sequence.
Taking gradient descent as an example, the vector sequence are generated by $x_{t+1}= G(x_t) \triangleq x_t-\alpha_t\nabla f(x_t)$, where the limit is the fixed-point $G(x^*) = x^*$ (i.e. $\nabla f(x^*)=0)$. One notable advantage of such acceleration methods is that they usually do not require to know how the vector sequence is actually generated. Thus the applicability of those methods is very wide.

Recently, \cite{scieur2016regularized} used the minimal polynomial extrapolation (MPE) method \citep{smith1987extrapolation} for convergence acceleration.
This is a nice example of using sequence acceleration methods to optimization problems.
In this paper, we are interested in another classical sequence acceleration method called \emph{Anderson acceleration} (or \emph{Anderson mixing}), which was proposed by Anderson in 1965 \citep{anderson1965iterative}.
The method is known to be quite efficient in a variety of applications
\citep{capehart1989techniques,pratapa2016anderson,higham2016anderson,loffeld2016considerations}.
The idea of Anderson acceleration is to maintain $m$ recent iterations for determining the next iteration point, where $m$ is a parameter (typically a very small constant).
Thus, it can be viewed as an extension of the existing momentum methods which usually use the last and current points to determine the next iteration point.
Anderson acceleration with slight modifications is described in Algorithm \ref{alg:am}.

\begin{algorithm}[h]
	\caption{Anderson Acceleration($m$)}
	\label{alg:am}
	\textbf{input:} $x_0, T, \lambda, \beta_t$\\
	Define $G(x)\triangleq x+F \triangleq x - \lambda\nabla f(x)$\;
	$x_1=G(x_0)$, $F_0=G(x_0)-x_0$\;
	\For{$t= 1, 2,\ldots T$}{
		$m_t=\min\{m,t\}$\;
		$F_t\triangleq G(x_t)-x_t$\;
		Solve $\min_{\alpha^t=(\alpha_0^t,\ldots,\alpha_{m_t}^t)^T}
		\left\|\sum_{i=0}^{m_t}{\alpha_i^t F_{t-i}}\right\|_2$  subject to $\sum_{i=0}^{m_t}{\alpha_i^t=1}$\; \label{line:ls}
		$x_{t+1}=(1-\beta_t)\sum_{i=0}^{m_t}{\alpha_i^tx_{t-i}} + \beta_t\sum_{i=0}^{m_t}{\alpha_i^tG(x_{t-i})}$\;
	}
	\Return $x_T$
\end{algorithm}

Note that the step in Line \ref{line:ls} of Algorithm \ref{alg:am} can be transformed to an equivalent unconstrained least-squares problem:
\begin{equation}\label{eq:transt}
\min_{(\alpha_1^t,\ldots,\alpha_{m_t}^t)^T}\Big\|F_t-\sum_{i=1}^{m_t}{\alpha_i^t (F_t-F_{t-i})}\Big\|_2,
\end{equation}
then let $\alpha_0^t= 1-\sum_{i=1}^{m_t}{\alpha_i^t}$. Using QR decomposition, \eqref{eq:transt} can be solved in time $2m_t^2d$, where $d$ is the dimension. Moreover, the QR decomposition of \eqref{eq:transt} at iteration $t$ can be efficiently obtained from that of at iteration $t-1$ in $O(m_td)$
(see, e.g. \citep{golub1996matrix}). The constant $m_t\leq m$ is usually very small. We use $m=3$ and $5$ for the numerical experiments in Section \ref{sec:exp}.
Hence, each iteration of Anderson acceleration can be implemented quite efficiently.

Many studies showed the relations between Anderson acceleration and other optimization
methods. 
In particular, for the quadratic case (linear problems), \cite{walker2011anderson} showed that it is related to the well-known Krylov subspace method GMRES (generalized minimal residual algorithm) \citep{saad1986gmres}.
Furthermore, \cite{potra2013characterization} showed that GMRES is equivalent to Anderson acceleration with any mixing parameters under $m =\infty$ (see Line 5 of Algorithm \ref{alg:am})
for linear problems.
Concretely, \cite{toth2015convergence} proved the first linear convergence rate $O(\kappa\ln\frac{1}{\epsilon})$ for linear problems with fixed parameter $\beta$, where $\kappa$
is the condition number.
Besides, \cite{eyert1996comparative}, and \cite{fang2009two} showed that Anderson acceleration is related to the multisecant quasi-Newton methods (more concretely, the generalized Broyden's second method).
Despite the above results,
the convergence results for this efficient method are still limited
(especially for general nonlinear problems and the case where $m$ is small).
In this paper, we analyze the convergence for small $m$ which is the typical case in practice and also provide the convergence analysis for general nonlinear problems.

\subsection{Our Contributions}

There has been a growing number of applications of Anderson acceleration method
\citep{pratapa2016anderson,higham2016anderson,loffeld2016considerations,scieur2018nonlinear}.
Towards a better understanding of this efficient method,
we make the following technical contributions:
\begin{enumerate}
	\item We prove the optimal $O(\sqrt{\kappa}\ln\frac{1}{\epsilon})$ convergence rate of the proposed Anderson-Chebyshev acceleration (i.e., Anderson acceleration with Chebyshev polynomial) for minimizing quadratic functions (see Theorem \ref{thm:opt}).
	Our result improves the previous result $O(\kappa\ln\frac{1}{\epsilon})$ 
	given by \citep{toth2015convergence} and matches the lower bound $\Omega(\sqrt{\kappa}\ln\frac{1}{\epsilon})$ provided by \citep{nesterov2014introductory}.
	Note that for ill-conditioned problems, the condition number $\kappa$ can be very large.
	
	\item Then, we prove the linear-quadratic convergence
	of Anderson acceleration for minimizing general nonlinear problems
	under some standard assumptions (see Theorem \ref{thm:gel}).
	Compared with Newton-like methods, it is more attractive
	since it does not require to compute (or approximate) Hessians,
	or Hessian-vector products.
	
	\item Besides,
	we propose a \emph{guessing algorithm} (Algorithm \ref{alg:guess}) for the case when the hyperparameters (e.g., $\mu, L$) are not available.
	We prove that it achieves a similar convergence rate $O(\sqrt{\kappa}\ln\frac{1}{\epsilon}+\sqrt{\kappa}(\ln\kappa\ln B)^2)$ (see Theorem \ref{thm:guess}). This guessing algorithm can also be combined with other algorithms, e.g., Gradient Descent (GD), Nesterov's Accelerated GD (NAGD).
	The experimental results (see Section \ref{app:expga}) show that these algorithms combined with the proposed guessing algorithm achieve much better performance. 
	
	\item Finally, the experimental results on the real-world UCI datasets and synthetic datasets demonstrate that Anderson acceleration methods converge significantly faster than other algorithms (see Section \ref{sec:exp}). Combined with our theoretical results, the experiments validate that Anderson acceleration methods (especially Anderson-Chebyshev acceleration) are efficient both in theory and practice.
\end{enumerate}

\subsection{Related Work}
\label{sec:rw}
As aforementioned, Anderson acceleration can be viewed as the extension of the momentum methods (e.g., NAGD) and the potential extension of Krylov subspace methods (e.g., GMRES) for nonlinear problems.
In particular, GD is the special case of Anderson acceleration with $m=0$, and to some extent NAGD can be viewed as $m=1$.
We also review the equivalence of GMRES and Anderson acceleration without truncation (i.e., $m =\infty$) in Appendix \ref{app:gam}. 
Besides, \cite{eyert1996comparative}, and \cite{fang2009two} showed that Anderson acceleration is related to the multisecant quasi-Newton methods.
Note that Anderson acceleration has the advantage over the Newton-like methods since it does not require the computation of Hessians or approximation of Hessians or Hessian-vector products.

There are many sequence acceleration methods in the numerical analysis literatures. In particular, the well-known Aitken's $\Delta^2$ process \citep{aitken1926bernoulli} accelerated the convergence of a sequence that is converging linearly.
Shanks generalized the Aitken extrapolation which was known as Shanks transformation \citep{shanks1955non}.
Recently, \cite{brezinski2018shanks} proposed a general framework for Shanks sequence transformations which includes many vector sequence acceleration methods.
One fundamental difference between Anderson acceleration and other sequence acceleration methods (such as MPE, RRE (reduced rank extrapolation) \citep{sidi1986acceleration,smith1987extrapolation}, etc.) is
that Anderson acceleration is a fully dynamic method \citep{capehart1989techniques}.
Here \emph{dynamic} means all iterations are in the same sequence, and it does not require to restart the procedure. It can be seen from Algorithm \ref{alg:am} that
all iterations are applied to the same sequence $\{x_t\}$.
In fact, in Capehart's PhD thesis \citep{capehart1989techniques}, several experiments were conducted to demonstrate the superior performance of Anderson acceleration over other semi-dynamic methods
such as MPE, RRE (semi-dynamic means that the algorithm maintains more than one sequences or
needs to restart several times).
More recently, Anderson acceleration with different variants and/or under different assumptions are widely studied (see e.g.,  \citep{zhang2018globally, evans2018proof, scieur2018nonlinearv2}).

\section{The Quadratic Case}
\label{sec:opt}

In this section, we consider the problem of minimizing a quadratic function
(also called least squares, or ridge regression \citep{boyd2004convex,hoerl1970ridge}).
The formulation of the problem is
\begin{equation}\label{prob:quad}
\min_{x\in \R^d} f(x)=\frac{1}{2}x^TAx-b^Tx,
\end{equation}
where $\mu I_d\preceq\nabla^2f=A\preceq LI_d$.
Note that $\mu$ and $L$ are usually called the strongly convex parameter and Lipschitz continuous gradient parameter, respectively (e.g. \citep{nesterov2014introductory, allen2017katyusha, lan2019unified}).
There are many algorithms for optimizing this type of functions. See e.g.~\citep{bubeck2015convex} for more details.
We analyze the problem of minimizing a more general function $f(x)$ in the next Section \ref{sec:gel}.

We prove that Anderson acceleration with Chebyshev polynomial parameters $\{\beta_t\}$ achieves the optimal convergence rate, i.e., it obtains an $\epsilon$-approximate solution using $O(\sqrt{\kappa}\ln\frac{1}{\epsilon})$ iterations.
The convergence result is stated in the following Theorem \ref{thm:opt}.

\begin{theorem}\label{thm:opt}
	The Anderson-Chebyshev acceleration method achieves the optimal convergence rate $O(\sqrt{\kappa}\ln\frac{1}{\epsilon})$ for obtaining an $\epsilon$-approximate solution of problem (\ref{prob:quad}) for any $0\leq m\leq k$,
	where $\kappa=L/\mu$ is the condition number, $k$ is defined in Definition \ref{def:k} and
	this method combines Anderson acceleration (Algorithm \ref{alg:am}) with
	the Chebyshev polynomial parameters
	$\beta_t=1/\big(\frac{L+\mu}{2}+\frac{L-\mu}{2}\cos(\frac{(2t-1)\pi}{2T})\big)$,
	for $t=1, 2, \ldots, T$.
\end{theorem}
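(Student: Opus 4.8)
The plan is to reduce the quadratic problem to a scalar polynomial-approximation question in the eigenbasis of $A$, and then to recognize the prescribed $\beta_t$ as exactly the parameters that turn the resulting polynomial into the minimax (Chebyshev) one. Concretely, let $x^*=A^{-1}b$ and $e_t=x_t-x^*$, so $\nabla f(x_t)=Ae_t$ and $F_t=-\lambda Ae_t$. The key elementary fact about quadratics is that $G$ is affine, hence for any weights with $\sum_{i=0}^{m_t}\alpha_i^t=1$ we have $\sum_i\alpha_i^t G(x_{t-i})=G\big(\sum_i\alpha_i^t x_{t-i}\big)$. Writing $\bar x_t:=\sum_i\alpha_i^t x_{t-i}$ and $\bar e_t:=\bar x_t-x^*=\sum_i\alpha_i^t e_{t-i}$, the update for $x_{t+1}$ collapses to $x_{t+1}=\bar x_t-\beta_t\lambda\nabla f(\bar x_t)$, i.e. $e_{t+1}=(I-\beta_t\lambda A)\bar e_t$. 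Inducting on $t$ (base $e_1=(I-\lambda A)e_0$) gives $e_t=p_t(A)e_0$, where $p_t$ is a real polynomial of degree at most $t$ with $p_t(0)=1$; explicitly $p_{t+1}(x)=(1-\beta_t\lambda x)\,q_t(x)$ with $q_t:=\sum_i\alpha_i^t p_{t-i}$ and $q_t(0)=1$. Since $A$ is symmetric with spectrum in $[\mu,L]$, $\|e_T\|\le\big(\max_{x\in[\mu,L]}|p_T(x)|\big)\|e_0\|$, so everything reduces to controlling $p_T$ on $[\mu,L]$.

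Next I would pin down the role of the least-squares step. Line~\ref{line:ls} selects $\alpha^t$ minimizing $\big\|\sum_i\alpha_i^t F_{t-i}\big\|_2=\lambda\|A\bar e_t\|_2=\lambda\|Aq_t(A)e_0\|_2$ over the affine hull of $\{p_t,p_{t-1},\dots,p_{t-m_t}\}$, i.e. it minimizes the gradient norm at the averaged iterate $\bar x_t$. The point is that $q_t=p_t$ (take $\alpha_0^t=1$ and the rest $0$) is always feasible, so this can only help; in particular, making that same choice at every iteration reproduces verbatim the $m=0$ run. Hence it suffices to prove the bound for $m=0$ and then argue that no admissible $m$ does worse. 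The hypothesis $m\le k$, with $k$ as in Definition~\ref{def:k}, is exactly what keeps the least-squares subspaces from degenerating and the polynomial degrees as stated.

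For $m=0$ the least-squares is vacuous ($\alpha_0^t\equiv1$), so $\bar e_t=e_t$ and $p_{t+1}(x)=(1-\beta_t\lambda x)p_t(x)$; unrolling (counting the initialization $x_1=G(x_0)$) gives $p_T(x)=\prod_t(1-\beta_t\lambda x)$. With the stated $\beta_t$, the reciprocals $1/(\beta_t\lambda)=\tfrac{L+\mu}{2}+\tfrac{L-\mu}{2}\cos\tfrac{(2t-1)\pi}{2T}$ are precisely the $T$ roots of the degree-$T$ Chebyshev polynomial shifted to $[\mu,L]$, so $p_T$ is exactly the normalized minimax polynomial $C_T$ on $[\mu,L]$ with $C_T(0)=1$. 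The classical extremal estimate then yields $\max_{x\in[\mu,L]}|p_T(x)|\le 2\big(\tfrac{\sqrt\kappa-1}{\sqrt\kappa+1}\big)^T$, hence $\|e_T\|\le 2\big(\tfrac{\sqrt\kappa-1}{\sqrt\kappa+1}\big)^T\|e_0\|$, and requiring the right-hand side to be at most $\epsilon$ gives $T=O(\sqrt\kappa\ln\tfrac1\epsilon)$. Optimality is then immediate: this matches the $\Omega(\sqrt\kappa\ln\tfrac1\epsilon)$ lower bound of \citep{nesterov2014introductory} for this class.

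I expect the one genuinely delicate point to be the comparison for general $m>0$. A single Chebyshev factor $I-\beta_t\lambda A$ is not a contraction — the Chebyshev steps deliberately overshoot and only the full product of $T$ of them contracts — so one cannot bound $\|e_{t+1}\|$ by $\|e_t\|$ per step and must instead couple the whole $m>0$ trajectory to the $m=0$ Chebyshev trajectory. My approach would be to diagonalize $A$ and argue coordinate-wise, using that each $I-\beta_t\lambda A$ is itself a polynomial in $A$ and therefore commutes with every $p_s(A)$, so that the minimality of $\|Aq_t(A)e_0\|_2$ at step $t$ propagates through the remaining fixed Chebyshev factors; the bound $m\le k$ is what keeps the averaging subspace from degenerating and makes this propagation valid. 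Steps one through three are otherwise routine bookkeeping plus the standard Chebyshev estimate.
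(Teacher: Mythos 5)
Your reduction of the quadratic case to a polynomial statement is the same starting point as the paper: the affine structure of $G$ gives $F_{t+1}=(I-\beta_t A)\sum_{i=0}^{m_t}\alpha_i^t F_{t-i}$ (the paper's (\ref{eq:quadx2})), and choosing $\beta_t$ as reciprocals of the Chebyshev roots shifted to $[\mu,L]$, together with the classical estimate $\bigl|P_T\bigl(-\frac{L+\mu}{L-\mu}\bigr)\bigr|\ge\frac12\bigl(\frac{\sqrt{\kappa}+1}{\sqrt{\kappa}-1}\bigr)^{T}$, is exactly what the paper does. The genuine gap is your treatment of $m\ge 1$. The claim that, because $q_t=p_t$ is feasible, the least-squares step ``can only help'' and it therefore suffices to prove the bound for $m=0$, is a non sequitur: the step minimizes $\|A\bar e_t\|_2$ at time $t$, but the remaining factors $\prod_{s\ge t}(I-\beta_s\lambda A)$ are fixed, individually non-contractive, and strongly direction-dependent (the overshooting you yourself point out), so a combination with smaller residual norm at time $t$ can concentrate its mass on eigendirections where the remaining Chebyshev product is large and end with a larger final error than the $m=0$ run; even the weaker claim $\|F_t\|\le\|F_t^{(m=0)}\|$ cannot be propagated by induction, since $(I-\beta_{t-1}\lambda A)$ is not a contraction. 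Minimality of a Euclidean norm does not propagate coordinate-wise, and the fact that the factors commute with each $p_s(A)$ does not repair this; your closing paragraph gestures at a ``propagation'' argument but never supplies it, and your reading of Definition \ref{def:k} as merely ``keeping the subspaces from degenerating'' is not what that definition does.

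What the paper actually does at this point is different and is the heart of its proof of Lemma \ref{lm:boundft1}: it writes the least-squares combination as an orthogonal projection, $\sum_{i=0}^{m_t}\alpha_i^tF_{t-i}=P_tF_t$ with $P_t=I-\tilde U_t\tilde U_t^T$, so that $F_{t+1}=(I-\beta_tA)P_t\dotsm(I-\beta_1A)P_1F_1$, then expands in the eigenbasis of $A$ and invokes Definition \ref{def:k} --- which is precisely where the hypothesis $m\le k$ enters --- as a per-coordinate growth bound $c_i'\le\bigl(1+\frac{1}{\sqrt{\kappa}+1}\bigr)c_i$ on the effect of each projection. The accumulated inflation $\bigl(1+\frac{1}{\sqrt{\kappa}+1}\bigr)^t$ is then absorbed against half of the Chebyshev contraction via $\bigl(1+\frac{1}{\sqrt{\kappa}+1}\bigr)^t\bigl(1-\frac{2}{\sqrt{\kappa}+1}\bigr)^{t/2}\le 1$; this is why the lemma carries a square root and why the final rate is $2\bigl(\frac{\sqrt{\kappa}-1}{\sqrt{\kappa}+1}\bigr)^{t/2}$ rather than the exponent $t$ your $m=0$ computation yields. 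Without an argument of this type (or some other quantitative control of how the adaptive mixing interacts with the later non-contractive factors), your proof establishes the theorem only for $m=0$, i.e.\ gradient descent with Chebyshev step sizes, which is the classical result and not the content of Theorem \ref{thm:opt}.
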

\textbf{Remark:} In this quadratic case, we mention that \cite{toth2015convergence} proved the first convergence rate $O(\kappa\ln\frac{1}{\epsilon})$ for fixed parameter $\beta$.
Here we use the Chebyshev polynomials to improve the result to the optimal $O(\sqrt{\kappa}\ln\frac{1}{\epsilon})$ which matches the lower bound $\Omega(\sqrt{\kappa}\ln\frac{1}{\epsilon})$.
Note that for ill-conditioned problems, the condition number $\kappa$ can be very large.
Also note that in practice the constant $m$ is usually very small. Particularly, $m=3$ has already achieved a remarkable performance from our experimental results (see Figures \ref{fig:lreg}--\ref{fig:5} in Section \ref{sec:exp}).

Before proving Theorem \ref{thm:opt}, we first define $k$ and then briefly review some properties of the Chebyshev polynomials.
We refer to \citep{rivlin1974chebyshev,olshanskii2014iterative,hageman2012applied} for more details of Chebyshev polynomials.

\begin{definition}\label{def:k}
	Let $v_i$'s be the unit eigenvectors of $A$, where $A$ is defined in \eqref{prob:quad}. Consider a unit vector $c\triangleq \sum_{i=1}^d{c_iv_i}$ and let $c'\triangleq \mathrm{Proj}_{B_k^\perp} c=\sum_{i=1}^d{c_i'v_i}$, where
	$\mathrm{Proj}_{B_k^\perp}$ denotes the projection to the orthogonal complement of the column space of $B_k \triangleq A[x_{t-k}-x_{t}, \dotsc, x_{t-1}-x_{t}] \in \mathbb{R}^{d\times k}$.
	Define $k$ to be the maximum integer such that $c_i'\leq (1+\frac{1}{\sqrt{\kappa}+1}){c_i}$ for any $i\in [d]$. 
\end{definition}
Obviously, $k\geq 0$ since $c'=c$ due to $B_0=0$ and $\mathrm{Proj}_{B_0^\perp}=I$.

Now we review the Chebyshev polynomials.  The \emph{Chebyshev polynomials} are polynomials $P_k(x)$, where $k\geq 0$, $\deg(P_k)=k$, which is defined by the recursive relation:
\begin{equation}\label{eq:cheb0}
\begin{split}
& P_0(x)=1,\\
& P_1(x)=x, \\
& P_{k+1}(x)=2xP_k(x)-P_{k-1}(x).
\end{split}
\end{equation}
The key property is that $P_k(x)$ has minimal deviation from $0$ on $[-1,1]$ among all polynomials $Q_k$ with $\deg(Q_k)=k$ and leading coefficient $\alpha_k=2^{k-1}$ for the largest degree term $x^k$, i.e.,
\begin{equation}\label{eq:chebmin}
\max_{x\in[-1,1]}|P_k(x)|\leq \max_{x\in[-1,1]}|Q_k(x)| \quad \mathrm{for~all}~ Q_k.
\end{equation}
In particular, for $|x|\leq 1$, Chebyshev polynomials can be written in an equivalent way:
\begin{align}
P_k(x)=\cos(k\arccos x). \label{eq:cheb}
\end{align}
In our proof, we use this equivalent form (\ref{eq:cheb}) instead of  (\ref{eq:cheb0}).
The equivalence can be verified as follows:
\begin{align}
P_k(x)&=2x\cos((k-1)\arccos x)-\cos((k-2)\arccos x) \notag\\
&= 2\cos \theta \cos((k-1)\theta)-\cos((k-2)\theta) \label{eq:xx11}\\
&=\cos(k\theta)+\cos((k-2)\theta)-\cos((k-2)\theta) \notag\\
&=\cos(k\arccos x), \label{eq:xx22}
\end{align}
where (\ref{eq:xx11}) and (\ref{eq:xx22}) use the transformation $x=\cos\theta$ due to $|x|\leq 1$.
According to (\ref{eq:cheb}), $\max_{x\in[-1,1]}|P_k(x)|=1$ and the $k$ roots of $P_k$ are as follows:
\begin{equation}\label{eq:root}
x_i=\cos\Big(\frac{(2i-1)\pi}{2k}\Big),~ i=1,2,\ldots,k.
\end{equation}
To demonstrate it more clearly, we provide an example for $P_4(x)$ (W-shape curve) in Figure \ref{fig:cheb}.
Since $k=4$ in this polynomial $P_4(x)$, the first root $x_1=\cos\left(\frac{(2i-1)\pi}{2k}\right)=\cos\left(\frac{\pi}{8}\right)\thickapprox 0.92$. The remaining three roots for $P_4(x)$ can be easily computed too.
\begin{figure}[!h]
	\centering
	\includegraphics[width=\linewidth]{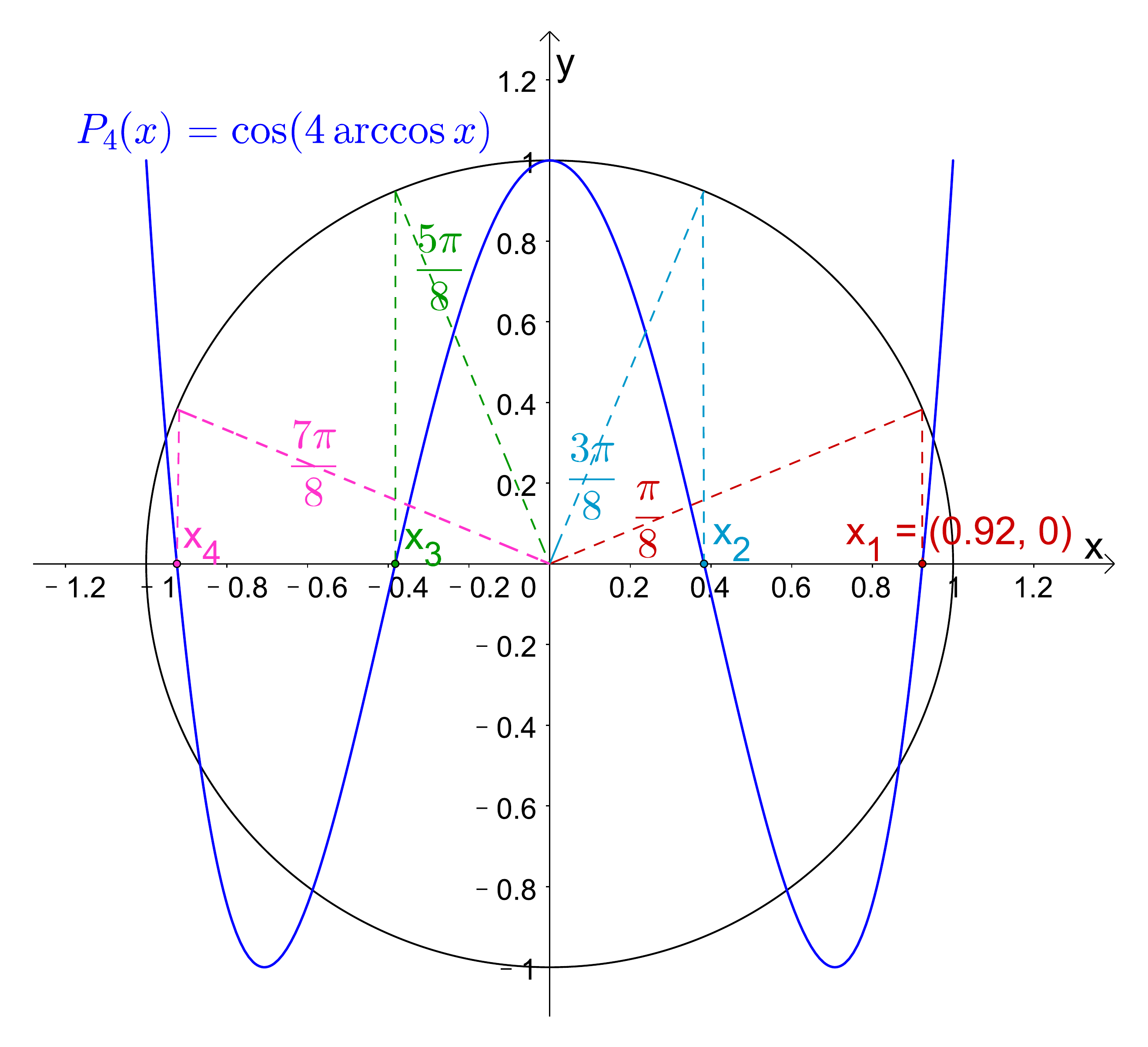}
	\caption{The Chebyshev polynomial $P_4(x)$}
	\label{fig:cheb}
\end{figure}

\begin{proofof}{Theorem \ref{thm:opt}}
	For iteration $t+1$, the residual $F_{t+1}\triangleq -\lambda\nabla f(x_{t+1}) = -(Ax_{t+1}-b)$  (let $\lambda=1$) can be deduced as follows:
	\begin{align}
	F_{t+1}
	&=b-Ax_{t+1}\notag \\
	&=b-A\biggl[(1-\beta_t)\sum_{i=0}^{m_t}{\alpha_i^t x_{t-i}}+
	\beta_t\sum_{i=0}^{m_t}{\alpha_i^t G(x_{t-i})}\biggr]\notag \\
	&=b-A\biggl[\sum_{i=0}^{m_t}{\alpha_i^t x_{t-i}}+
	\beta_t\sum_{i=0}^{m_t}{\alpha_i^t \left(b-Ax_{t-i})\right)}\biggr] \label{eq:quadx1}\\
	&=b-\beta_tAb-A\biggl[
	\sum_{i=0}^{m_t}{\alpha_i^t \left((I-\beta_tA)x_{t-i})\right)}\biggr] \notag \\
	&=(I-\beta_tA)\sum_{i=0}^{m_t}{\alpha_i^t \left(b-Ax_{t-i})\right)} \notag \\
	&=(I-\beta_tA)\sum_{i=0}^{m_t}{\alpha_i^t F_{t-i}},  \label{eq:quadx2}
	\end{align}
	where (\ref{eq:quadx1}) uses $G(x_t)=x_t +F_t$.
	
	To bound $\|F_{t+1}\|_2$ (i.e., $\|\nabla f(x_{t+1})\|_2$), we first obtain the following lemma by using Singular Value Decomposition (SVD) to solve the least squares problem (\ref{eq:transt}) and then using several transformations. We defer the proof of Lemma \ref{lm:boundft1} to Appendix \ref{app:pflm2}.
	\begin{lemma}\label{lm:boundft1}
		Let $F_1=b-Ax_{1}$ and $F_{t+1}=b-Ax_{t+1}$, then
		\begin{equation}\label{eq:chebh}
		\|F_{t+1}\|_2/\|F_1\|_2 \leq \sqrt{2\min_{\beta}\max_{\lambda\in [\mu,L]}|H_t(\lambda)|}
		\end{equation}
		where $H_t(\lambda) =(1-\beta_t\lambda)\dotsm(1-\beta_1\lambda)$ is a degree $t$ polynomial.
	\end{lemma}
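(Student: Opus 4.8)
The plan is to make the recursion $F_{t+1}=(I-\beta_tA)\sum_{i=0}^{m_t}\alpha_i^tF_{t-i}$ from \eqref{eq:quadx2} fully explicit and then unroll it in the eigenbasis of $A$. First I would solve the least-squares step of Line~\ref{line:ls} exactly: eliminating $\alpha_0^t=1-\sum_{i=1}^{m_t}\alpha_i^t$ turns it into the unconstrained problem \eqref{eq:transt}, i.e., $\min_\alpha\|F_t-D_t\alpha\|_2$ with $D_t\triangleq[F_t-F_{t-1},\dots,F_t-F_{t-m_t}]$. Since $f$ is quadratic, $F_t-F_{t-i}=(b-Ax_t)-(b-Ax_{t-i})=A(x_{t-i}-x_t)$, so $\mathrm{col}(D_t)$ coincides with $\mathrm{col}(B_{m_t})$ for the matrix $B_{m_t}=A[x_{t-m_t}-x_t,\dots,x_{t-1}-x_t]$ of Definition~\ref{def:k}. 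Taking an SVD of $D_t$, the residual at the minimizer is the orthogonal projection of $F_t$ onto $\mathrm{col}(B_{m_t})^\perp$, so that $\sum_{i=0}^{m_t}\alpha_i^tF_{t-i}=F_t-D_t\alpha^t=\mathrm{Proj}_{B_{m_t}^\perp}F_t$ and hence
\[
F_{t+1}=(I-\beta_tA)\,\mathrm{Proj}_{B_{m_t}^\perp}F_t .
\]

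Next I would pass to an orthonormal eigenbasis $\{v_j\}$ of $A$ (with eigenvalues $\lambda_j\in[\mu,L]$) and track the coordinates $(F_t)_j\triangleq v_j^\top F_t$. Applying $I-\beta_tA$ multiplies coordinate $j$ by $1-\beta_t\lambda_j$, so the only thing to control is the projection, and this is exactly where $m_t\le m\le k$ enters: by Definition~\ref{def:k} applied to the unit vector $c=F_t/\|F_t\|_2$, the projection inflates no eigen-coordinate by more than $1+\tfrac1{\sqrt\kappa+1}$, i.e., $|(\mathrm{Proj}_{B_{m_t}^\perp}F_t)_j|\le(1+\tfrac1{\sqrt\kappa+1})\,|(F_t)_j|$ for every $j$. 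Combining with the previous display, $|(F_{t+1})_j|\le(1+\tfrac1{\sqrt\kappa+1})\,|1-\beta_t\lambda_j|\,|(F_t)_j|$; unrolling from $t$ down to $1$ gives
\[
|(F_{t+1})_j|\le\Big(1+\tfrac1{\sqrt\kappa+1}\Big)^{t}\Big|\textstyle\prod_{i=1}^{t}(1-\beta_i\lambda_j)\Big|\,|(F_1)_j|=\Big(1+\tfrac1{\sqrt\kappa+1}\Big)^{t}|H_t(\lambda_j)|\,|(F_1)_j| ,
\]
and squaring and summing over $j$ yields $\|F_{t+1}\|_2\le\big(1+\tfrac1{\sqrt\kappa+1}\big)^{t}\big(\max_{\lambda\in[\mu,L]}|H_t(\lambda)|\big)\,\|F_1\|_2$.

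It remains to turn this into \eqref{eq:chebh} using the extremal property of Chebyshev polynomials. With the parameters $\beta_i$ of Theorem~\ref{thm:opt}, $H_t$ is the degree-$t$ Chebyshev polynomial on $[\mu,L]$ normalized so that $H_t(0)=1$, which is exactly the polynomial of that form minimizing $\max_{\lambda\in[\mu,L]}|H_t(\lambda)|$; hence $\max_{\lambda\in[\mu,L]}|H_t(\lambda)|=\min_\beta\max_{\lambda\in[\mu,L]}|H_t(\lambda)|$. The classical Chebyshev-iteration estimate $\max_{\lambda\in[\mu,L]}|H_t(\lambda)|\le 2\big(\tfrac{\sqrt\kappa-1}{\sqrt\kappa+1}\big)^{t}\le 2\big(1+\tfrac1{\sqrt\kappa+1}\big)^{-2t}$ then gives $\big(1+\tfrac1{\sqrt\kappa+1}\big)^{t}\sqrt{\max_\lambda|H_t(\lambda)|}\le\sqrt2$, so that $\big(1+\tfrac1{\sqrt\kappa+1}\big)^{t}\max_\lambda|H_t(\lambda)|\le\sqrt{2\max_\lambda|H_t(\lambda)|}=\sqrt{2\min_\beta\max_{\lambda\in[\mu,L]}|H_t(\lambda)|}$, which is the claimed bound.

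The step I expect to be the main obstacle is the projection bound in the second paragraph: rigorously justifying, uniformly in $t$, that the data-dependent operator $\mathrm{Proj}_{B_{m_t}^\perp}$ amplifies no eigen-coordinate of $F_t$ by more than $1+\tfrac1{\sqrt\kappa+1}$ — i.e., making precise how the condition of Definition~\ref{def:k} is to be applied at each iteration (in particular for the subspaces $B_{m_t}$ with $m_t\le m\le k$, whose defining vectors change with $t$) — and then checking that the accumulated factor $(1+\tfrac1{\sqrt\kappa+1})^{t}$ stays balanced against the $O(\sqrt\kappa)$-fast decay of $|H_t|$. Solving the least-squares problem by SVD, passing to the eigenbasis, and invoking the Chebyshev minimax estimate are routine once the projection bound is established.
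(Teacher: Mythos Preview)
Your proposal is correct and follows essentially the same route as the paper's proof. The paper also solves the least-squares step via SVD to obtain $\sum_{i=0}^{m_t}\alpha_i^tF_{t-i}=P_tF_t$ with $P_t=I-\tilde U_t\tilde U_t^T$ an orthogonal projection, unrolls $F_{t+1}=(I-\beta_tA)P_t\cdots(I-\beta_1A)P_1F_1$, expands in the eigenbasis of $A$, uses the coordinate-inflation bound from Definition~\ref{def:k} to get a factor $(1+\tfrac1{\sqrt\kappa+1})^t$, and finally absorbs that factor into $\sqrt{2\min_\beta\max_\lambda|H_t(\lambda)|}$ via the Chebyshev estimate and the elementary inequality $(1+\tfrac1{\sqrt\kappa+1})^t(1-\tfrac2{\sqrt\kappa+1})^{t/2}\le1$. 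The only cosmetic difference is that the paper tracks the ``normalized'' coefficients $c_{t,j}$ with the polynomial factored out (bounding $\sum_j c_{t,j}^2$), whereas you track $|(F_t)_j|$ directly; the two bookkeeping schemes are equivalent, and you have correctly identified the one genuinely delicate point --- the per-step, per-coordinate projection bound coming from Definition~\ref{def:k} --- which the paper also treats as the key input.
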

	
	According to Lemma \ref{lm:boundft1}, to bound $\|F_{t+1}\|_2$, it is sufficient to bound the right-hand-side (RHS) of (\ref{eq:chebh}) (i.e., $\min_{\beta}\max_{\lambda\in [\mu,L]}|H_t(\lambda)|$). 
	So we want to choose parameter $\beta$ in order to make $\max_{\lambda\in [\mu,L]}|H_t(\lambda)|$ as small as possible. 
	According to (\ref{eq:chebmin}) (the minimal deviation property of standard Chebyshev polynomials), 
	hence a natural idea is to choose $\beta$ such that $H_t(\lambda) =(1-\beta_t\lambda)\dotsm(1-\beta_1\lambda)$ is a kind of modified Chebyshev polynomials.
	In order to do this, we first transform $[\mu,L]$ into $[-1,1]$, i.e., let $\lambda=\frac{L+\mu}{2}+\frac{L-\mu}{2}x$, where $x\in[-1,1]$. 
	Also note that polynomial $H_t(\lambda) =(1-\beta_t\lambda)\dotsm(1-\beta_1\lambda)$ has (only) one constraint, i.e., $H_t(0)=1$. Thus we choose $\beta$ such that 
	\begin{align}
	H_t(\lambda)&=P_t\Big(\frac{2\lambda-(L+\mu)}{L-\mu}\Big)\Big/P_t\Big(-\frac{L+\mu}{L-\mu}\Big) \notag\\
	&=P_t(x)\big{/}P_t\Big(-\frac{L+\mu}{L-\mu}\Big), \label{eq:trh}
	\end{align}
	where $P_t(\cdot)$ is the standard Chebyshev polynomials.
	Now, the RHS of (\ref{eq:chebh}) can be bounded as follows:
	\begin{align}
	&\min_{\beta}\max_{\lambda\in [\mu,L]}|H_t(\lambda)| \notag\\
	&\leq \max_{x\in [-1,1]}\Big|P_t(x)\big{/}P_t\Big(-\frac{L+\mu}{L-\mu}\Big)\Big| \label{eq:trhcall}\\
	&\leq 1\Big{/}\Big|P_t\Big(-\frac{L+\mu}{L-\mu}\Big)\Big|, \label{eq:ptx}
	\end{align}
	where (\ref{eq:trhcall}) uses (\ref{eq:trh}), and (\ref{eq:ptx}) uses $\max_{x\in[-1,1]}|P_t(x)|=1$ (see (\ref{eq:cheb})).
	According to (\ref{eq:root}), it is not hard to see that $H_t(\lambda)$ is defined by the mixing parameters $\beta_i=1\Big/\Big(\frac{L+\mu}{2}+\frac{L-\mu}{2}\cos\big(\frac{(2i-1)\pi}{2t}\big)\Big)$ according to $\lambda=\frac{L+\mu}{2}+\frac{L-\mu}{2}x$, where $i=1,2,\ldots,t$. Note that the roots of standard Chebyshev polynomials (i.e., (\ref{eq:root}))  can be found from many textbooks, e.g., Section 1.2 of \citep{rivlin1974chebyshev}.
	Now, we only need to bound $\big|P_t\big(-\frac{L+\mu}{L-\mu}\big)\big|$. First, we need to transform the form (\ref{eq:cheb}) of Chebyshev polynomials $P_t(x)$ as follows:
	\begin{align*}
	P_t(x)&=\cos(t\arccos x)\\
	&=\cos(t\theta) \qquad \mathrm{Define}~x\triangleq\cos\theta\\
	&=\left(e^{i\theta t}+e^{-i\theta t}\right)/2 \\
	&=\left((\cos\theta+i\sin\theta)^t+(\cos\theta-i\sin\theta)^t\right)/2\\
	&=\Big(\big(x+\sqrt{x^2-1}\big)^t+\big(x-\sqrt{x^2-1}\big)^t\Big)/2.
	\end{align*}
	Let $x=-\frac{L+\mu}{L-\mu}$, we get $\sqrt{x^2-1}=\sqrt{\frac{(L+\mu)^2-(L-\mu)^2}{(L-\mu)^2}}=\sqrt{\frac{4L\mu}{(L-\mu)^2}}
	=\frac{2\sqrt{L\mu}}{L-\mu}$. So we have
	\begin{align}
	\Big|P_t\Big(-\frac{L+\mu}{L-\mu}\Big)\Big|
	&\geq \frac{1}{2}\Big(\frac{L+\mu}{L-\mu}+\frac{2\sqrt{L\mu}}{L-\mu}\Big)^t \notag\\
	&\geq \frac{1}{2}\Big(\frac{\sqrt{L}+\sqrt{\mu}}{\sqrt{L}-\sqrt{\mu}}\Big)^t. \label{eq:thx1}
	\end{align}
	Now, the RHS of (\ref{eq:chebh}) can be bounded as
	\begin{align}
	\sqrt{2\min_{\beta}\max_{\lambda\in [\mu,L]}|H_t(\lambda)|} &\leq  \sqrt{2\big{/}\Big|P_t\Big(-\frac{L+\mu}{L-\mu}\Big)\Big|} \label{eq:thx2}\\
	& \leq 2\Big(\frac{\sqrt{L}-\sqrt{\mu}}{\sqrt{L}+\sqrt{\mu}}\Big)^{t/2}, \label{eq:thx3}
	\end{align}
	where (\ref{eq:thx2}) follows from (\ref{eq:ptx}), and (\ref{eq:thx3}) follows from (\ref{eq:thx1}).
	Then, according to (\ref{eq:chebh}), the gradient norm is bounded as
	$\|\nabla f(x_{t+1})\|_2=\|F_{t+1}\|_2
	\leq 2\big(\frac{\sqrt{L}-\sqrt{\mu}}{\sqrt{L}+\sqrt{\mu}}\big)^{t/2} \|F_1\|_2
	=2\big(\frac{\sqrt{\kappa}-1}{\sqrt{\kappa}+1}\big)^{t/2} \|\nabla f(x_1)\|_2$,
	where $\kappa=L/\mu$.
	Note that if the number of iterations $t=(\sqrt{\kappa}+1)\ln\frac{1}{\epsilon}$, then
	\begin{align*}
	\big(\frac{\sqrt{\kappa}-1}{\sqrt{\kappa}+1}\big)^{t/2}=\big(1-\frac{2}{\sqrt{\kappa}+1}\big)^{t/2} \leq \epsilon.
	\end{align*}
	Thus the Anderson-Chebyshev acceleration method achieves the optimal convergence rate $O(\sqrt{\kappa}\ln\frac{1}{\epsilon})$ for obtaining an $\epsilon$-approximate solution.
\end{proofof}

\section{The General Case}
\label{sec:gel}
In this section, we analyze the Anderson Acceleration (Algorithm \ref{alg:am}) in the general nonlinear case:
\begin{equation}\label{prob:gel}
\min_{x \in \R^d} f(x).
\end{equation}
We prove that Anderson acceleration method achieves the linear-quadratic convergence rate under the following standard Assumptions \ref{asp:1} and \ref{asp:2},
where $\|\cdot\|$ denotes the Euclidean norm.
Let $\mathcal{B}_t$ denote the small matrix of the least-square problem in Line 7 of Algorithm \ref{alg:am},
i.e., $\mathcal{B}_t\triangleq[F_{t}-F_{t-1}, \dotsc, F_t-F_{t-m}] \in \mathbb{R}^{d\times m}$ (see problem (\ref{eq:transt})).
Then, we define its condition number $\kappa_t\triangleq\|\nabla f(x_t)\|/\tilde{\mu}_t$ and $\tilde{\kappa} \triangleq \max_t\{\kappa_t\}$, where $\tilde{\mu}_t$ denotes the least non-zero singular value of $\mathcal{B}_t$.

\begin{assumption}\label{asp:1}
	The Hessian $\nabla^2f$ satisfies $\mu\leq \|\nabla^2f\| \leq L$,
	where $0\leq \mu \leq L$.
\end{assumption}
\begin{assumption}\label{asp:2}
	The Hessian $\nabla^2f$ is $\gamma$-Lipschitz continuous, i.e.,
	\begin{equation}\label{asp3}
	\|\nabla^2f(x) - \nabla^2f(y)\| \leq \gamma \|x-y\|.
	\end{equation}
\end{assumption}

\begin{theorem}\label{thm:gel}
	Suppose Assumption \ref{asp:1} and \ref{asp:2} hold. Let step-size $\lambda=\frac{2}{L+\mu}$.
	The convergence rate of Anderson Acceleration($m$) (Algorithm \ref{alg:am}) is linear-quadratic for problem (\ref{prob:gel}), i.e.,
	\begin{equation}\label{eq:converge}
	\|\nabla f(x_{t+1})\| \leq c_1\Delta_t^2 + c_2\Delta_t\|\nabla f(x_t)\| + (1-c_3)\|\nabla f(x_t)\|,
	\end{equation}
	where $c_1=\frac{3\tilde{\kappa}^2\gamma m}{(L+\mu)^2}$, ~$c_2=\frac{2\tilde{\kappa}\beta_t\gamma\sqrt{m}}{(L+\mu)^2}$, ~$c_3=\beta_t\frac{2\mu}{L+\mu}$ and $\Delta_t \triangleq \max_{i\in[m]}\|x_t-x_{t-i}\|$.
\end{theorem}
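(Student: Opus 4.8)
The plan is to reproduce the residual identity from the proof of Theorem~\ref{thm:opt} with the constant Hessian $A$ replaced by averaged Hessians along the relevant segments, and to push the resulting second-order discrepancy (controlled by $\gamma$ through Assumption~\ref{asp:2}) into the terms $c_1\Delta_t^2$ and $c_2\Delta_t\n{\nabla f(x_t)}$. Write $g_t\triangleq\sum_{i=0}^{m_t}\alpha_i^t\nabla f(x_{t-i})$; since $G(x)=x-\lambda\nabla f(x)$ and $\sum_i\alpha_i^t=1$, Line~7 of Algorithm~\ref{alg:am} gives $x_{t+1}=x_t+\delta_t$ with $\delta_t=\sum_{i=0}^{m_t}\alpha_i^t(x_{t-i}-x_t)-\beta_t\lambda g_t$. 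I would Taylor-expand with integral remainder: $\nabla f(x_{t+1})=\nabla f(x_t)+\tilde H_t\,\delta_t$ where $\tilde H_t\triangleq\int_0^1\nabla^2 f(x_t+s\delta_t)\,\dist s$, and $\nabla f(x_{t-i})=\nabla f(x_t)+H_i(x_{t-i}-x_t)$ where $H_i\triangleq\int_0^1\nabla^2 f(x_t+s(x_{t-i}-x_t))\,\dist s$; averaging the second family against the weights $\alpha_i^t$ and using $\sum_i\alpha_i^t=1$ gives $g_t-\nabla f(x_t)=\sum_i\alpha_i^t H_i(x_{t-i}-x_t)$. Substituting into $\nabla f(x_{t+1})=\nabla f(x_t)+\tilde H_t\sum_i\alpha_i^t(x_{t-i}-x_t)-\beta_t\lambda\tilde H_t g_t$ and writing $\tilde H_t(x_{t-i}-x_t)=H_i(x_{t-i}-x_t)+(\tilde H_t-H_i)(x_{t-i}-x_t)$ yields $\nabla f(x_{t+1})=(I-\beta_t\lambda\tilde H_t)\,g_t+E_t$ with $E_t\triangleq\sum_{i=0}^{m_t}\alpha_i^t(\tilde H_t-H_i)(x_{t-i}-x_t)$ --- i.e.\ the analogue of the quadratic-case recursion~\eqref{eq:quadx2} (now with $\lambda=\tfrac{2}{L+\mu}$) perturbed by $E_t$.

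The ``linear'' piece is handled as in the proof of Theorem~\ref{thm:opt}. Assumption~\ref{asp:1} gives $\mu I\preceq\tilde H_t\preceq LI$, so with $\lambda=\tfrac{2}{L+\mu}$ (and, as one checks, $\beta_t\le1$) one has $\n{I-\beta_t\lambda\tilde H_t}=\max\{|1-\beta_t\tfrac{2\mu}{L+\mu}|,\,|1-\beta_t\tfrac{2L}{L+\mu}|\}=1-\beta_t\tfrac{2\mu}{L+\mu}=1-c_3$. Moreover $\alpha_0^t=1$ (others zero) is feasible for the least-squares problem in Line~7 and $F_{t-i}=-\lambda\nabla f(x_{t-i})$, so optimality gives $\lambda\n{g_t}=\big\|\sum_i\alpha_i^t F_{t-i}\big\|\le\n{F_t}=\lambda\n{\nabla f(x_t)}$, i.e.\ $\n{g_t}\le\n{\nabla f(x_t)}$; hence $\n{(I-\beta_t\lambda\tilde H_t)g_t}\le(1-c_3)\n{\nabla f(x_t)}$.

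For $E_t$, Assumption~\ref{asp:2} gives $\n{\tilde H_t-H_i}\le\tfrac{\gamma}{2}\n{\delta_t-(x_{t-i}-x_t)}\le\tfrac{\gamma}{2}(\n{\delta_t}+\Delta_t)$ and $\n{x_{t-i}-x_t}\le\Delta_t$, so $\n{E_t}\le\tfrac{\gamma}{2}(\n{\delta_t}+\Delta_t)\,\Delta_t\,\n{\alpha^t}_1$, while $\n{\delta_t}\le\n{\alpha^t}_1\Delta_t+\beta_t\lambda\n{g_t}\le\n{\alpha^t}_1\Delta_t+\beta_t\lambda\n{\nabla f(x_t)}$. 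The last ingredient is a bound on $\n{\alpha^t}_1$: the minimum-norm solution of~\eqref{eq:transt} is $\alpha'=\mathcal{B}_t^{+}F_t$ and $\n{\mathcal{B}_t^{+}}=1/\tilde{\mu}_t$, so $\n{\alpha'}_2\le\n{F_t}/\tilde{\mu}_t=\lambda\n{\nabla f(x_t)}/\tilde{\mu}_t=\lambda\kappa_t\le\lambda\tilde{\kappa}$; since $\alpha_0^t=1-\sum_{i\ge1}\alpha_i^t$ and $\n{\alpha^t}_1\ge1$, this gives $\n{\alpha^t}_1\le1+2\sqrt{m}\,\lambda\tilde{\kappa}=O\big(\sqrt{m}\,\tilde{\kappa}/(L+\mu)\big)$. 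Plugging in and using $\lambda=\tfrac{2}{L+\mu}$ bounds $\n{E_t}\lesssim\tfrac{\gamma m\tilde{\kappa}^2}{(L+\mu)^2}\Delta_t^2+\tfrac{\gamma\beta_t\sqrt{m}\,\tilde{\kappa}}{(L+\mu)^2}\Delta_t\n{\nabla f(x_t)}$, i.e.\ $c_1\Delta_t^2+c_2\Delta_t\n{\nabla f(x_t)}$ (the absolute constants in $c_1,c_2$ come out of this step), and then $\n{\nabla f(x_{t+1})}\le\n{(I-\beta_t\lambda\tilde H_t)g_t}+\n{E_t}$ gives~\eqref{eq:converge}. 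The main obstacle I expect is precisely this last estimate: ensuring every Hessian-evaluation point stays within $O(\Delta_t+\beta_t\lambda\n{\nabla f(x_t)})$ of $x_t$ so that a single use of Assumption~\ref{asp:2} suffices, and extracting $\n{\alpha^t}_1\lesssim\sqrt{m}\,\tilde{\kappa}/(L+\mu)$ cleanly from $\n{\mathcal{B}_t^{+}}=1/\tilde{\mu}_t$ together with the affine constraint $\sum_i\alpha_i^t=1$ --- this is what produces the factor $\tilde{\kappa}$ and the $\sqrt{m}$-versus-$m$ split between $c_2$ and $c_1$.
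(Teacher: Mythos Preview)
Your proposal is correct and follows essentially the same route as the paper. The paper writes the argument in terms of $G$, $F$ and $G'=I-\lambda\nabla^2 f$ (using the fundamental theorem of calculus for $G_{t+1}-\sum_i\alpha_i^tG_{t-i}$), whereas you Taylor-expand $\nabla f$ directly with averaged Hessians; since $G=x-\lambda\nabla f$ these are the same decomposition, and both arrive at the identical split ``linear term $(I-\beta_t\lambda\tilde H_t)g_t$'' plus ``Hessian-Lipschitz error $E_t$'', bound $\|g_t\|\le\|\nabla f(x_t)\|$ by optimality of the least-squares step, and control $\|\alpha\|$ via $\mathcal{B}_t^{+}$ (the paper uses QR, you use the pseudoinverse---same bound $\|\alpha\|_2\le 2\tilde\kappa/(L+\mu)$).
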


\noindent\textbf{Remark:}\vspace{-3mm}
\begin{enumerate}
	\item The constant $m\geq 0$ is usually very small. Particularly, we use $m=3$ and $5$ for the numerical experiments in Section \ref{sec:exp}. Hence $\Delta_t$ is very small and also decreases as the algorithm converges.
	\item Besides, one can also use $\|\nabla f(x_t)\|$ instead of $\Delta_t$ in  (\ref{eq:converge}) according to the property of $f$ (Assumption \ref{asp:1}), i.e., $\mu\|x_t-x^*\| \leq\|\nabla f(x_t)-\nabla f(x^*)\| =\|\nabla f(x_t)\|$, and
	$\|x_t-x_{t-i}\| = \|x_t-x^*+x^*-x_{t-i}\| 
	\leq \|x_t-x^*\|+\|x_{t-i}-x^*\|$.

	\item Note that the first two terms in RHS of (\ref{eq:converge}) converge quadratically and the last term converges linearly. 
	Due to the fully dynamic property of Anderson acceleration as we discussed in Section \ref{sec:rw}, it turns out the exact convergence rate of Anderson acceleration in the general case is not easy to obtain.
	But we note that the convergence rate is roughly linear, i.e., $O(\frac{1}{c_3}\log\frac{1}{\epsilon})$ since the first two quadratic terms converge much faster than the last linear term in some neighborhood of optimum.
	In particular, if $f$ is a quadratic function, then $\gamma =0$ (Assumption \ref{asp:2}) and thus $c_1=c_2=0$ in (\ref{eq:converge}). Only the last linear term remained, thus it converges linearly (see the following corollary).
\end{enumerate}
\vspace{-2mm}
\begin{corollary}\label{cor:quad}
	If $f$ is a
	quadratic function, let step-size $\lambda=\frac{2}{L+\mu}$ and $\beta_t=1$.
	Then the convergence rate of Anderson Acceleration is linear, i.e., $O(\kappa\ln\frac{1}{\epsilon})$, where $\kappa=L/\mu$ is the condition number.
\end{corollary}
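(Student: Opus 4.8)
The plan is to obtain Corollary~\ref{cor:quad} as an immediate specialization of Theorem~\ref{thm:gel}. First I would note that when $f$ is quadratic its Hessian $\nabla^2 f = A$ is constant, so Assumption~\ref{asp:2} holds with $\gamma = 0$. Hence in the bound (\ref{eq:converge}) the coefficients $c_1 = \frac{3\tilde{\kappa}^2\gamma m}{(L+\mu)^2}$ and $c_2 = \frac{2\tilde{\kappa}\beta_t\gamma\sqrt{m}}{(L+\mu)^2}$ both vanish, regardless of the value of $\tilde{\kappa}$, and the two quadratic terms $c_1\Delta_t^2$ and $c_2\Delta_t\|\nabla f(x_t)\|$ disappear. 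With the prescribed choices $\lambda = \frac{2}{L+\mu}$ (which is exactly the step-size used in Theorem~\ref{thm:gel}) and $\beta_t = 1$, the surviving coefficient is $c_3 = \beta_t\frac{2\mu}{L+\mu} = \frac{2\mu}{L+\mu}$, so (\ref{eq:converge}) collapses to the purely linear recursion
\begin{equation*}
\|\nabla f(x_{t+1})\| \;\le\; (1-c_3)\,\|\nabla f(x_t)\| \;=\; \Big(1-\frac{2\mu}{L+\mu}\Big)\|\nabla f(x_t)\| \;=\; \frac{\kappa-1}{\kappa+1}\,\|\nabla f(x_t)\|.
\end{equation*}

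Next I would unroll this recursion starting from $t=1$, which gives
\begin{equation*}
\|\nabla f(x_{t+1})\| \;\le\; \Big(\frac{\kappa-1}{\kappa+1}\Big)^{t}\|\nabla f(x_1)\| \;=\; \Big(1-\frac{2}{\kappa+1}\Big)^{t}\|\nabla f(x_1)\|.
\end{equation*}
Then, exactly as at the end of the proof of Theorem~\ref{thm:opt}, taking $t = (\kappa+1)\ln\frac{1}{\epsilon}$ (or even the tighter $t=\frac{\kappa+1}{2}\ln\frac{1}{\epsilon}$) and using $1-x\le e^{-x}$ yields $\big(1-\frac{2}{\kappa+1}\big)^{t}\le \epsilon$, so an $\epsilon$-approximate solution is reached within $O(\kappa\ln\frac{1}{\epsilon})$ iterations, as claimed.

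There is essentially no obstacle here: the entire content lies in Theorem~\ref{thm:gel}, and the corollary is its degenerate quadratic case, reached by a one-line substitution ($\gamma=0$, $\beta_t=1$) followed by the standard geometric-series estimate. The only point worth a moment's care is to confirm that the proof of Theorem~\ref{thm:gel} never divides by $\gamma$ and so remains valid at $\gamma=0$; granting that, nothing else is needed. (It is also worth remarking that this recovers, up to constants, the $O(\kappa\ln\frac{1}{\epsilon})$ rate of \citep{toth2015convergence} for fixed $\beta$, now for any $0\le m$ and as a byproduct of the general-case analysis.)
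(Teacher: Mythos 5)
Your proposal is correct and is exactly the paper's own route: the corollary is obtained by specializing Theorem \ref{thm:gel} with $\gamma=0$ (constant Hessian), so $c_1=c_2=0$ and, with $\beta_t=1$, the bound (\ref{eq:converge}) reduces to $\|\nabla f(x_{t+1})\|\le\big(1-\frac{2\mu}{L+\mu}\big)\|\nabla f(x_t)\|=\frac{\kappa-1}{\kappa+1}\|\nabla f(x_t)\|$, giving $O(\kappa\ln\frac{1}{\epsilon})$ iterations. Your added check that the proof of Theorem \ref{thm:gel} never divides by $\gamma$ is a sensible (and valid) sanity check, but otherwise nothing differs from the paper's argument.
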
 \vspace{-1mm}
Note that this corollary recovers the previous result (i.e., $O(\kappa\ln\frac{1}{\epsilon})$) obtained by \citep{toth2015convergence},
and we use \emph{Chebyshev polyniomial} to improve this result to the optimal convergence rate $O(\sqrt{\kappa}\ln\frac{1}{\epsilon})$ in our previous Section \ref{sec:opt} (see Theorem \ref{thm:opt}).
Concretely, we transfer the weight of step-size $\lambda$ to the parameters $\beta_t$'s and use Chebyshev polynomial parameters $\beta_t$'s in our Theorem \ref{thm:opt} instead of using fixed parameter $\beta \equiv 1$. 

\vspace{1mm}
Now, we provide a proof sketch for Theorem \ref{thm:gel}. The detailed proof can be found in Appendix \ref{app:pfthm1}.

\noindent{\em Proof Sketch of Theorem \ref{thm:gel}.}
Consider the iteration $t+1$, we have $F_t=-\frac{2}{L+\mu}\nabla f(x_t)$ according to $\lambda =\frac{2}{L+\mu}$.
First, we need to demonstrate several useful forms of $x_{t+1}$ as follows:
\begin{align}
x_{t+1}&= (1-\beta_t)\sum_{i=0}^{m_t}{\alpha_i^t x_{t-i}}
+ \beta_t\sum_{i=0}^{m_t}{\alpha_i^t G(x_{t-i})} \notag \\
&=\sum_{i=0}^{m_t}{\alpha_i^t x_{t-i}}
+ \beta_t\sum_{i=0}^{m_t}{\alpha_i^t\Bigl(G(x_{t-i})-x_{t-i}\Bigr)} \notag \\
&=\sum_{i=0}^{m_t}{\alpha_i^t x_{t-i}}
+ \beta_t\sum_{i=0}^{m_t}{\alpha_i^tF_{t-i}}  \label{eq:x3}\\
&=x_t-\sum_{i=1}^{m_t}{\alpha_i^t (x_t-x_{t-i})}  \notag\\
 &\qquad\quad+ \beta_t\Big(F_t-\sum_{i=1}^{m_t}{\alpha_i^t(F_t-F_{t-i})}\Big), \label{eq:x2}
\end{align}
where (\ref{eq:x3}) holds due to the definition $G_t=G(x_t)=x_t+F_t$, and (\ref{eq:x2}) holds since $\sum_{i=0}^{m_t}{\alpha_i^t=1}$.

Then, to bound $\|F_{t+1}\|_2$ (i.e., $\|\nabla f(x_{t+1})\|_2$), we deduce $F_{t+1}$ as follows:
\begin{align}
F_{t+1}
&= G_{t+1} - x_{t+1} \notag \\
&= G_{t+1} - \sum_{i=0}^{m_t}{\alpha_i^t x_{t-i}}
- \beta_t\sum_{i=0}^{m_t}{\alpha_i^tF_{t-i}} \notag \\
&= G_{t+1} - \sum_{i=0}^{m_t}{\alpha_i^t (G_{t-i}-F_{t-i})}
- \beta_t\sum_{i=0}^{m_t}{\alpha_i^tF_{t-i}} \notag \\
&= G_{t+1} - \sum_{i=0}^{m_t}{\alpha_i^t G_{t-i}} + (1-\beta_t)\mathcal{F}, \label{eq:sf2}
\end{align}
where (\ref{eq:sf2}) uses the definition
$\mathcal{F} \triangleq \sum_{i=0}^{m_t}{\alpha_i^tF_{t-i}}$.
Now, we bound the first two terms of (\ref{eq:sf2}) as follows:
\begin{align}
&G_{t+1} - \sum_{i=0}^{m_t}{\alpha_i^t G_{t-i}}  \notag\\
&= G_{t+1} - \bigl(G_{t} - \sum_{i=1}^{m_t}{\alpha_i^t}(G_{t}-G_{t-i})\bigr) \notag\\
&= \int_0^1G'\Big(x_t+u(x_{t+1}-x_t)\Big)(x_{t+1}-x_t)\,du \notag\\
&\quad~
-\sum_{i=1}^{m_t}{\alpha_i^t}\int_0^1G'\Bigl(x_t+u(x_{t-i}-x_t)\Bigr)(x_{t-i}-x_t)\,du \notag\\
&= \sum_{i=1}^{m_t}{\alpha_i^t}\int_0^1G'\Bigl(x_t+u(x_{t+1}-x_t)\Bigr)(x_{t-i}-x_t)\,du \notag\\
&\quad~
+ \int_0^1G'\Bigl(x_t+u(x_{t+1}-x_t)\Bigr)\beta_t\mathcal{F}\,du  \notag\\
&\quad~ -\sum_{i=1}^{m_t}{\alpha_i^t}\int_0^1G'\Bigl(x_t+u(x_{t-i}-x_t)\Bigr)(x_{t-i}-x_t)\,du,  \label{eq:sint}
\end{align}
where (\ref{eq:sint}) is obtained by using (\ref{eq:x2}) to replace $x_{t+1}$.
To bound (\ref{eq:sint}), we use Assumptions \ref{asp:1}, \ref{asp:2}, and the equation
\begin{equation*}
G_t' = I+F_t' = I - \frac{2}{L+\mu}\nabla^2f(x_t).
\end{equation*}
After some non-trivial calculations (details can be found in Appendix \ref{app:pfthm1}), we obtain
\begin{align*}
\|F_{t+1}\| &\leq \frac{\gamma(m\ns{\alpha}+\sqrt{m}\n{\alpha})\Delta_t^2}{L+\mu} + \frac{\gamma\sqrt{m}\n{\alpha}\beta_t\Delta_t\|\mathcal{F}\|}{L+\mu} \\
& \quad\qquad +\Bigl(1-\frac{2\mu}{L+\mu}\beta_t\Bigr)\|\mathcal{F}\|,
\end{align*}
where $\n{\alpha}$ denotes the Euclidean norm of $\alpha=(\alpha_1^t,\ldots,\alpha_{m_t}^t)^T$.
Then, according to the problem (\ref{eq:transt}) and the definition of $\mathcal{F} \triangleq \sum_{i=0}^{m_t}{\alpha_i^tF_{t-i}}$, we have $\|\mathcal{F}\| \leq  \|F_t\|$.
Finally, we bound $\n{\alpha}\leq\frac{2\tilde{\kappa}}{L+\mu}$ using QR decomposition of problem (\ref{eq:transt}) and recall $F_t=-\frac{2}{L+\mu}\nabla f(x_t)$ to finish the proof of Theorem \ref{thm:gel}.
\QEDB

\section{Guessing Algorithm}
\label{sec:guess}
In this section, we provide a \emph{Guessing Algorithm} (described in Algorithm \ref{alg:guess}) which guesses the parameters (e.g., $\mu, L$) dynamically.
Intuitively, we guess the parameter $\mu$ and the condition number $\kappa$ in a doubling way.
Note that in general these parameters are not available, since the time for computing these parameters is almost the same as (or even longer than) solving the original problem.
Also note that the condition in Line 14 of Algorithm \ref{alg:guess} depends on the algorithm used in Line 12.

\begin{algorithm}[!h]
	\caption{Guessing Algorithm}
	\label{alg:guess}
	\textbf{input:} $x_0, T, \delta, B$\\
	Let $t=0$\;
	\For{$i= 1, 2,\ldots $}{
		$\kappa_i=e^{i+2}$\;
		\For{$j=1,2,\ldots, \ln B$}{
			$\mu_i=e^j\delta,  L_i = \mu_i\kappa_i, t_i = 1$\;
			\DoWhile{ $\frac{\|\nabla f(x_t)\|}{\|\nabla f(x_{t-1})\|}\leq 2\left(\frac{\sqrt{\kappa_i}-1}{\sqrt{\kappa_i}+1}\right)^{t_i}$}{
				$t_i = \lfloor et_i\rfloor$\;
				\If{$t+t_i>T$}{
					break\;
				}
				$x_{t-1}= x_t$\;
				$x=$Anderson Acceleration($x_t,t_i,\mu_i, L_i$) //can be replaced by other algorithms\;
				$t = t+t_i, x_t = x$\;
			}
			\If{$\|\nabla f(x_t)\| > \|\nabla f(x_{t-1})\|$}{
				$x_t = x_{t-1}$\;
			}
		}
	}
	\Return $x_t$
\end{algorithm}

The convergence result of our Algorithm \ref{alg:guess} is stated in the following Theorem \ref{thm:guess}.
The detailed proof is deferred to Appendix \ref{app:pfthm3}.
Note that we only prove the quadratic case for Theorem \ref{thm:guess}, but it is possible to extend it to the general case.

\begin{theorem}
	\label{thm:guess}
	Without knowing the parameters $\mu$ and $L$, Algorithm \ref{alg:guess} achieves $O(\sqrt{\kappa}\ln\frac{1}{\epsilon}+\sqrt{\kappa}(\ln\kappa\ln B)^2)$ convergence rate for obtaining an $\epsilon$-approximate solution of problem (\ref{prob:quad}), where $\kappa=L/\mu$, and $B$ can be any number as long as the eigenvalue spectrum belongs to $[\delta,B\delta]$. 
\end{theorem}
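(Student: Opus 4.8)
The plan is to separate the run of Algorithm~\ref{alg:guess} into a ``search'' portion, during which the guesses $(\mu_i,L_i)$ do not yet bracket the true spectrum, and one ``good'' phase in which they do; Theorem~\ref{thm:opt} then forces the optimal rate on the good phase, and the search portion must be shown to be cheap. \textbf{Step 1 (locating the good guess).} Since $\mu\in[\delta,B\delta]$, set $j^\star=\lfloor\ln(\mu/\delta)\rfloor\le\ln B$, so $\mu_{j^\star}:=e^{j^\star}\delta\in[\mu/e,\mu]$, and let $i^\star$ be the smallest index with $e^{i^\star+2}\ge e\kappa$, so $i^\star=O(\ln\kappa)$ and $\kappa_{i^\star}=e^{i^\star+2}\le e^2\kappa$. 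For this pair, $\mu_{i^\star,j^\star}\le\mu$ and $L_{i^\star,j^\star}=\mu_{j^\star}\kappa_{i^\star}\ge(\mu/e)(e\kappa)=L$, i.e. $[\mu,L]\subseteq[\mu_{i^\star,j^\star},L_{i^\star,j^\star}]$ while $\kappa_{i^\star}=\Theta(\kappa)$. (If the interval already brackets $[\mu,L]$ at some pair the algorithm visits earlier, that pair automatically has $\kappa_i\ge\kappa$ and, being reached no later than $i^\star$, also $\kappa_i=O(\kappa)$, so the argument below applies to it verbatim.)

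\textbf{Step 2 (the good phase is optimal).} Whenever the current guess satisfies $[\mu,L]\subseteq[\mu_i,L_i]$, the Chebyshev filter built from $(\mu_i,L_i)$ has $\max_{\lambda\in[\mu,L]}|H_\tau(\lambda)|\le\max_{\lambda\in[\mu_i,L_i]}|H_\tau(\lambda)|$, so the computation in the proof of Theorem~\ref{thm:opt} applies with the guessed parameters and gives $\|\nabla f(x_{t+\tau})\|\le 2\big(\tfrac{\sqrt{\kappa_i}-1}{\sqrt{\kappa_i}+1}\big)^{\tau}\|\nabla f(x_t)\|$ for each sub-call of length $\tau$. Hence the do--while condition is always satisfied, $t_i$ keeps being multiplied by $e$, and after $O(\log(\sqrt{\kappa_i}\ln\frac1\epsilon))$ rounds the accumulated sub-call length reaches $\Theta(\sqrt{\kappa_i}\ln\frac1\epsilon)$; since the $t_i$'s grow geometrically, the total number of iterations spent in this phase is $O(\sqrt{\kappa_i}\ln\frac1\epsilon)=O(\sqrt{\kappa}\ln\frac1\epsilon)$, at which point $\|\nabla f\|\le\epsilon$.

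\textbf{Step 3 (the search portion is cheap).} Every pair visited before the good phase is ``bad'': the guessed interval misses part of $[\mu,L]$, which (if $\mu_i\le\mu$ this forces $L_i<L$, and $\mu_i>\mu$ is symmetric) means the Chebyshev filter $H_{t_i}$ amplifies some eigencomponent of the gradient geometrically in $t_i$, at a rate governed by the multiplicative gap between the guessed and the true endpoint. Using the closed form $P_{t_i}(y)=\tfrac12\big((y+\sqrt{y^2-1})^{t_i}+(y-\sqrt{y^2-1})^{t_i}\big)$ one shows that once $t_i$ exceeds a threshold that is $O(\sqrt{\kappa_i}\,\ln\kappa\,\ln B)$ the measured ratio $\|\nabla f(x_t)\|/\|\nabla f(x_{t-1})\|$ exceeds $2\big(\tfrac{\sqrt{\kappa_i}-1}{\sqrt{\kappa_i}+1}\big)^{t_i}$, so the do--while exits; as $t_i$ is geometric the whole phase costs $O(\sqrt{\kappa_i}\,\ln\kappa\,\ln B)=O(\sqrt{\kappa}\,\ln\kappa\,\ln B)$ iterations. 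The roll-back in Lines~14--15 guarantees the monitored gradient norm never increases from one phase to the next, so none of these iterations undo earlier progress. There are at most $i^\star\cdot\ln B=O(\ln\kappa\,\ln B)$ bad pairs, so the search portion costs $O(\sqrt{\kappa}\,(\ln\kappa\,\ln B)^2)$ iterations; adding Step~2 yields the claimed bound.

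\textbf{Main obstacle.} The delicate point is the uniform bound $O(\sqrt{\kappa_i}\,\ln\kappa\,\ln B)$ on the length of a bad phase in Step~3. The hard sub-case is a guess that is only \emph{barely} wrong --- $L_i$ just below $L$, or $\mu_i$ just above $\mu$ --- where the amplification (equivalently, the contraction deficit) is so mild that the do--while test keeps passing for many rounds; one must quantify precisely how $\max_{\lambda\in[\mu,L]}|H_{t_i}(\lambda)|-2\big(\tfrac{\sqrt{\kappa_i}-1}{\sqrt{\kappa_i}+1}\big)^{t_i}$ grows with $t_i$ and with the endpoint mismatch, and verify that the per-phase losses telescope against the geometric schedule of $t_i$ and the doubling of $\kappa_i$. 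A secondary subtlety is that the single-step roll-back in Lines~14--15 only undoes the last sub-call of a phase, so one must also argue that no intermediate sub-call inside a bad phase can inflate the gradient by more than a constant factor before the roll-back or the next phase corrects it.
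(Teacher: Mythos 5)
Your Step 3 is where the argument breaks, and you have essentially flagged the hole yourself. You need a uniform bound $O(\sqrt{\kappa_i}\,\ln\kappa\,\ln B)$ on the length of a ``bad'' phase, but the do-while test in Algorithm \ref{alg:guess} is a test on the \emph{measured} ratio $\|\nabla f(x_t)\|/\|\nabla f(x_{t-1})\|$, not on the worst-case Chebyshev envelope over $[\mu,L]$. If the current residual happens to have very small mass on the eigendirections that the guessed interval $[\mu_i,L_i]$ fails to cover (or if the mismatch of an endpoint is tiny, e.g.\ $\mu_i$ barely above $\mu$), the amplification you invoke acts on a negligible component and the test can keep passing for arbitrarily many rounds, limited only by the budget $T$; no bound of the form $O(\sqrt{\kappa_i}\,\ln\kappa\,\ln B)$ holds uniformly. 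Your accounting charges all such iterations as pure waste, so the claimed $O(\sqrt{\kappa}(\ln\kappa\ln B)^2)$ for the search portion does not follow. A second unresolved point is the factor $2$ in the test: a passing sub-call with small $t_i$ may actually \emph{increase} the gradient norm (when $2\big(\tfrac{\sqrt{\kappa_i}-1}{\sqrt{\kappa_i}+1}\big)^{t_i}>1$), and the roll-back only protects against the final sub-call of a phase, so your telescoping needs an explicit control of the accumulated $2$'s, which you do not supply.

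The paper's proof avoids both issues by never classifying phases as good or bad. Its observation is that any round that passes the test certifies real measured contraction at rate at least that of condition number $e^2\kappa$, because Lemma \ref{lm:condnum} shows the outer index satisfies $\kappa_i\le e^2\kappa$ (once the guessed interval brackets $[\mu,L]$, Theorem \ref{thm:opt} makes the test always pass, so the loop never advances $i$ further). Long phases with a ``wrong'' guess are therefore harmless: they produce progress, not waste. The only overheads are (i) at most one discarded sub-call per $(i,j)$ pair, of which there are at most $\ln\kappa\ln B$, and (ii) the factor $2^{i_j}$ accumulated over the $i_j$ passing rounds of each phase, which is absorbed using Jensen's inequality (Lemma \ref{lm:boundsum}) and then the resulting inequality in $T$ is solved by Lemma \ref{lm:boundt}. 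If you want to keep your good/bad decomposition, you would have to credit the measured progress made during long ``bad'' phases rather than bound their length, at which point you are reconstructing the paper's global accounting; as written, the proposal has a genuine gap at its central step.
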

\noindent\textbf{Remark:} We provide a simple example to show why this guessing algorithm is useful.
Note that algorithms usually need the (exact) parameters $\mu$ and $L$ to set the step size.
Without knowing the exact values $\mu$ and $L$, one needs to approximate these parameters once at the beginning.
Let $\mu'=\frac{1}{c_1}\mu$ and $L'=c_2L$ denote the approximated values, where $c_1, c_2\geq 1$.
Without guessing them dynamically, one fixes $\mu'$ and $L'$ all the time in its algorithm. 
According to the lower bound $\Omega(\sqrt{\kappa}\ln\frac{1}{\epsilon})$, we know that its convergence rate cannot be better than $O(\sqrt{\kappa'}\ln \frac{1}{\epsilon})=O(\sqrt{c_1c_2\kappa}\ln \frac{1}{\epsilon})$, where $\kappa'=L'/\mu'$.
However, if one combines with our Algorithm \ref{alg:guess} (guessing the parameters dynamically), 
the convergence rate can be improved to $O(\sqrt{\kappa}\ln\frac{1}{\epsilon}+\sqrt{\kappa}(\ln\kappa\ln (c_1c_2\kappa))^2)$ according to our Theorem \ref{thm:guess} by letting $\delta=\mu'$ and $B\delta=L'$ (hence $B=c_1c_2\kappa$).
Note that there is no $\epsilon$ (accuracy) in the second term $\sqrt{\kappa}(\ln\kappa\ln (c_1c_2\kappa))^2$. Thus the rate turns to the optimal $O(\sqrt{\kappa}\ln\frac{1}{\epsilon})$ when $\epsilon\rightarrow 0$.
To achieve an $\epsilon$-approximate solution, our guessing algorithm can improve the convergence a lot especially for an imprecise estimate at the beginning (i.e., $c_1$ and $c_2$ are very large). 
The corresponding experimental results in Section \ref{app:expga} (see Figure~\ref{fig:7}) indeed validate our theoretical results.

\section{Experiments}
\label{sec:exp}
In this section, we conduct the numerical experiments on the real-world UCI datasets\footnote{The UCI datasets can be downloaded from \url{https://archive.ics.uci.edu/ml}} and synthetic datasets.
We compare the performance among these five algorithms: Anderson Acceleration (AA), Anderson-Chebyshev acceleration (AA-Cheby), vanilla Gradient Descent (GD), Nesterov's Accelerated Gradient Descent (NAGD) \citep{nesterov2014introductory} and Regularized Minimal Polynomial Extrapolation (RMPE) with $k=5$ (same as \citep{scieur2016regularized}).

Regarding the hyperparameters, we directly set them from their corresponding theoretical results. See Proposition 1 of \citep{lessard2016analysis} for GD and NAGD. For RMPE5, we follow the same setting as in \citep{scieur2016regularized}. For our AA/AA-Cheby, we set them according to our Theorem \ref{thm:opt} and \ref{thm:gel}.

Figure~\ref{fig:lreg} demonstrates the convergence performance of these algorithms in general nonlinear case and Figures~\ref{fig:1}--\ref{fig:5} demonstrate the convergence performance in quadratic case.
The last Figure~\ref{fig:7} demonstrates the convergence performance of these algorithms combined with our guessing algorithm (Algorithm \ref{alg:guess}).
The values of $m$ in the caption of figures denote the mixing parameter of Anderson acceleration algorithms (see Line 5 of Algorithm \ref{alg:am}).

\begin{figure}[!htb]
	\centering
	\begin{minipage}[htb]{0.25\textwidth}
		\includegraphics[width=\textwidth]{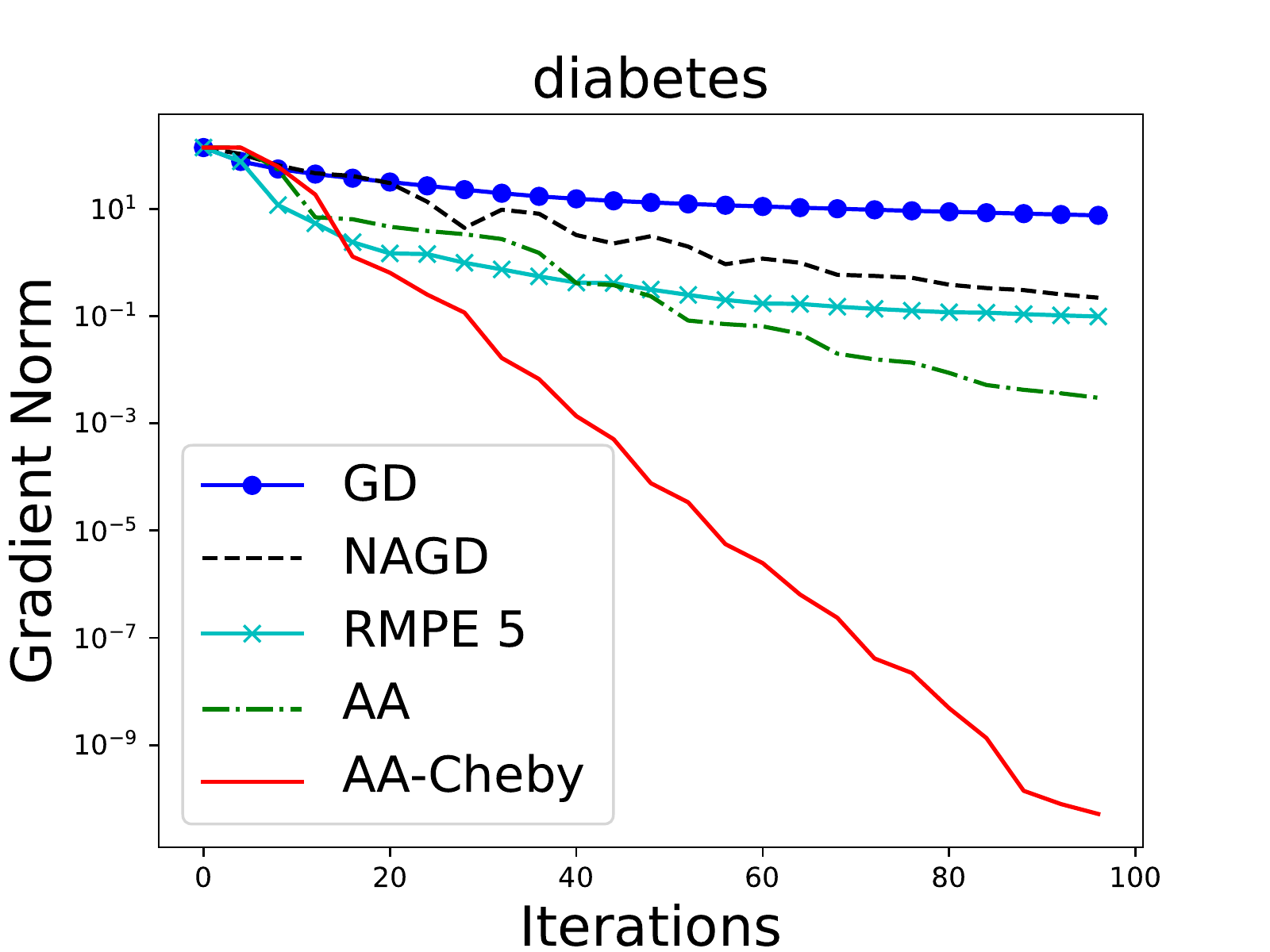}
	\end{minipage}%
	\begin{minipage}[htb]{0.25\textwidth}
		\includegraphics[width=\textwidth]{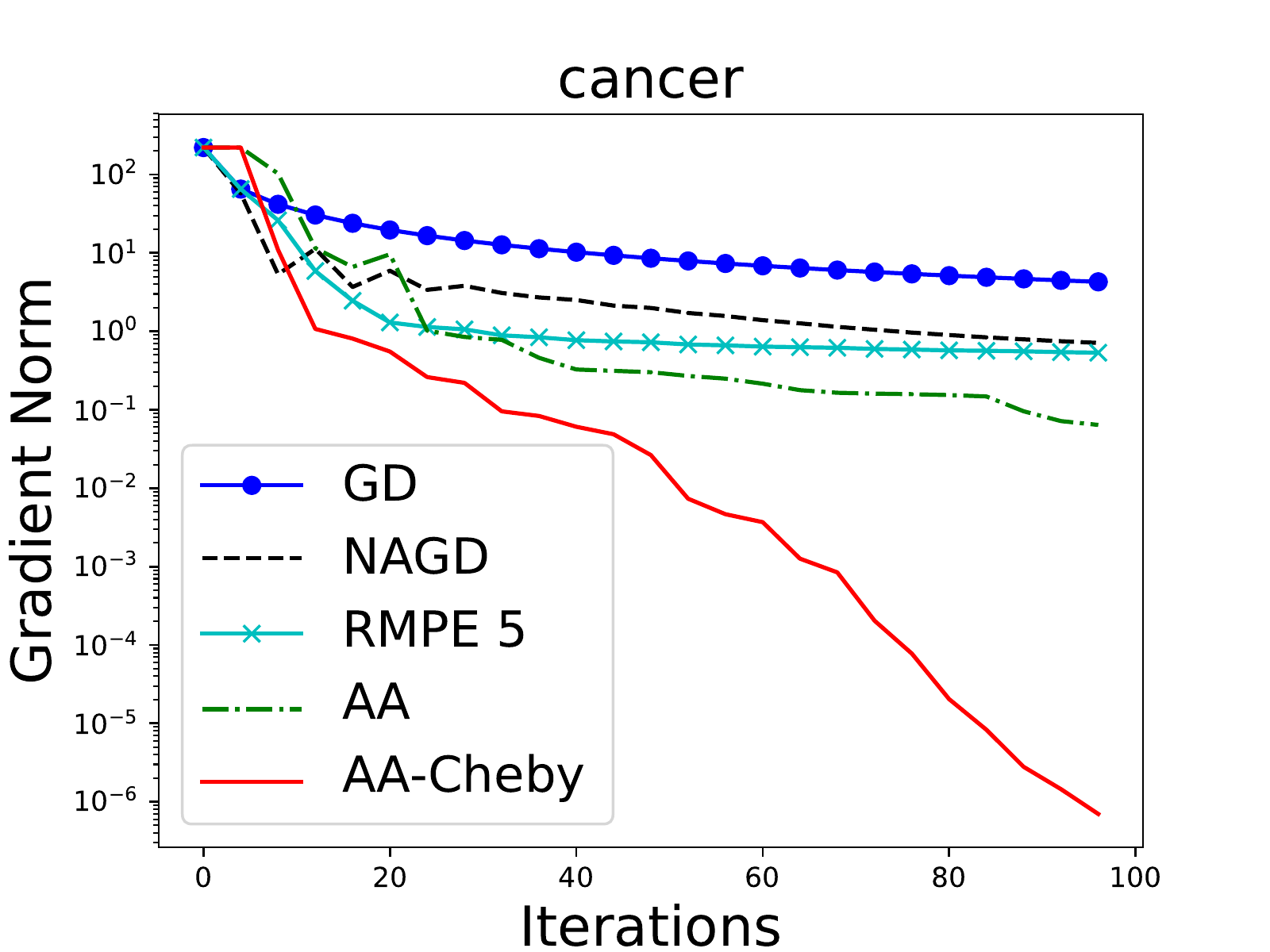}
	\end{minipage}
	\caption{Logistic regression, $m=3$}
	\label{fig:lreg}
\end{figure}

In Figure~\ref{fig:lreg}, we use the negative log-likelihood as the loss function $f$ (logistic regression), i.e., $f(\theta)= - \sum_{i=1}^n(y_i\log\phi(\theta^Tx_i) + (1-y_i)\log(1-\phi(\theta^Tx_i)))$, where $\phi(z)= 1/(1+\exp(-z))$. We run these five algorithms on real-world \emph{diabetes} and \emph{cancer} datasets which are standard UCI datasets.
The x-axis and y-axis represent the number of iterations and the norm of the gradient of loss function respectively.

\begin{figure}[!htb]
	\centering
	\begin{minipage}[htb]{0.25\textwidth}
		\includegraphics[width=\textwidth]{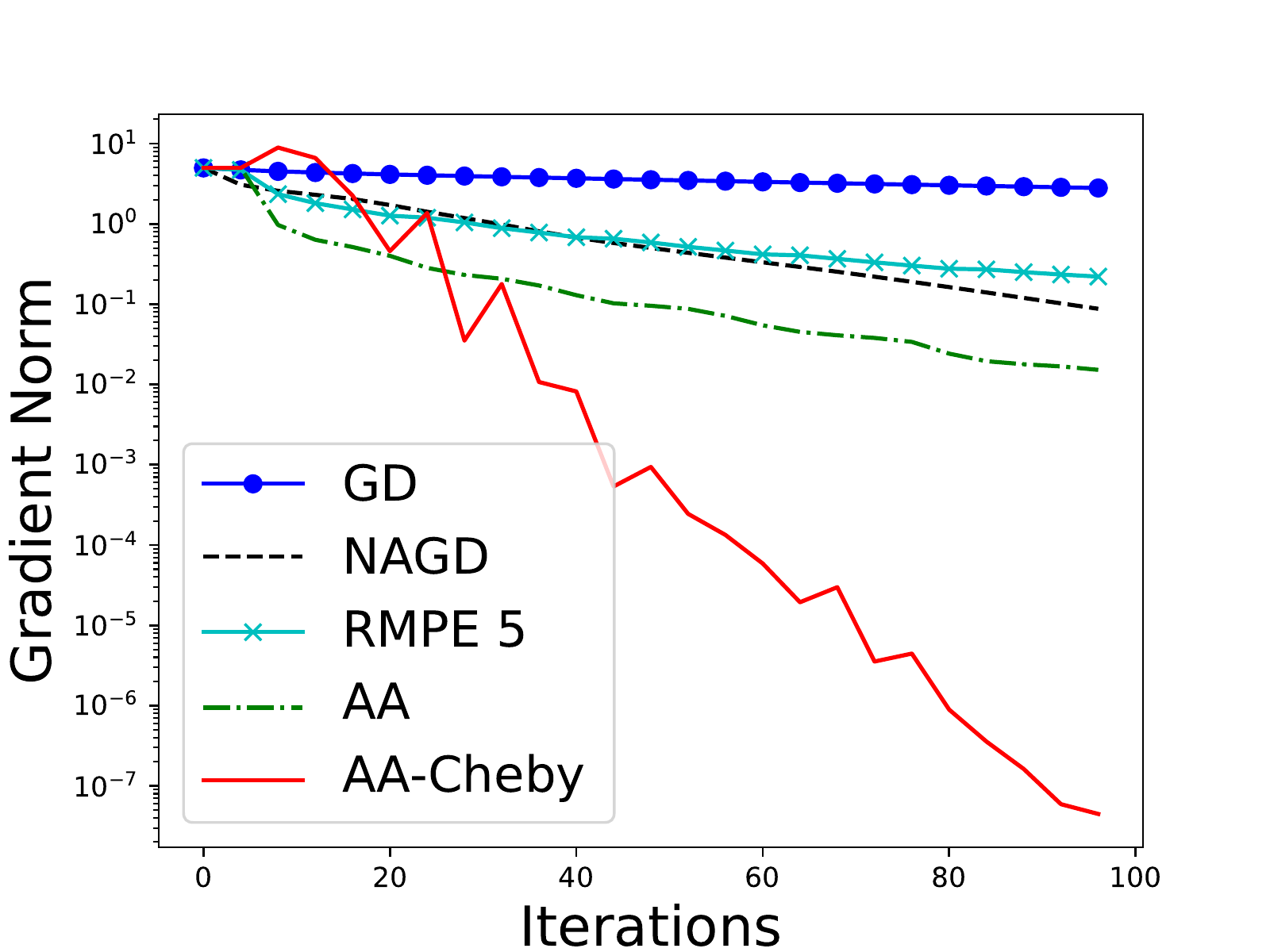}
	\end{minipage}%
	\begin{minipage}[htb]{0.25\textwidth}
		\includegraphics[width=\textwidth]{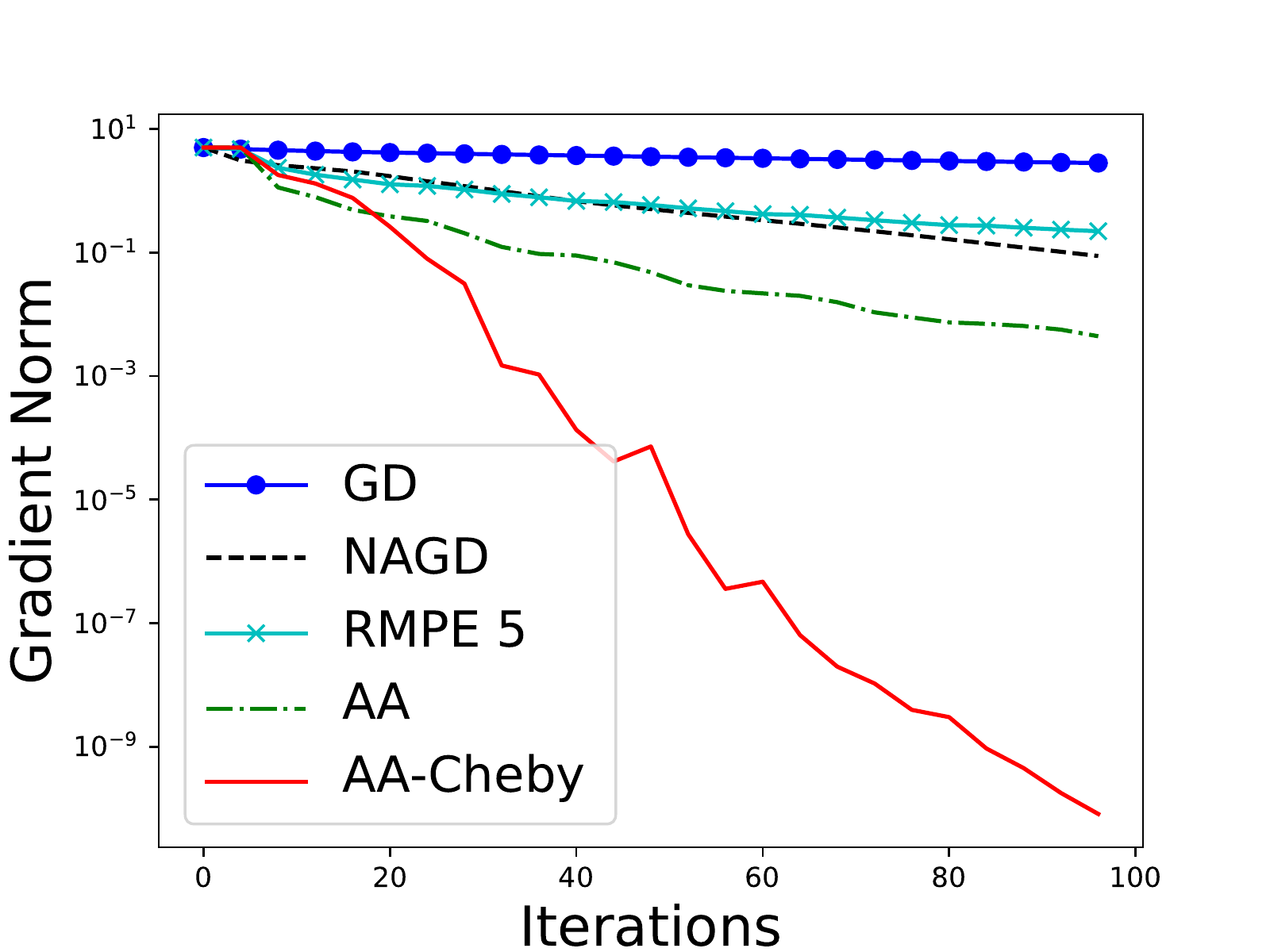}
	\end{minipage}
	\caption{$\kappa\in[0,500]$; $m=3$ (left), $m=5$ (right)}
	\label{fig:1}\vspace{-3mm}
\end{figure}

\begin{figure}[!htb]
	\centering
	\begin{minipage}[htb]{0.25\textwidth}
		\includegraphics[width=\textwidth]{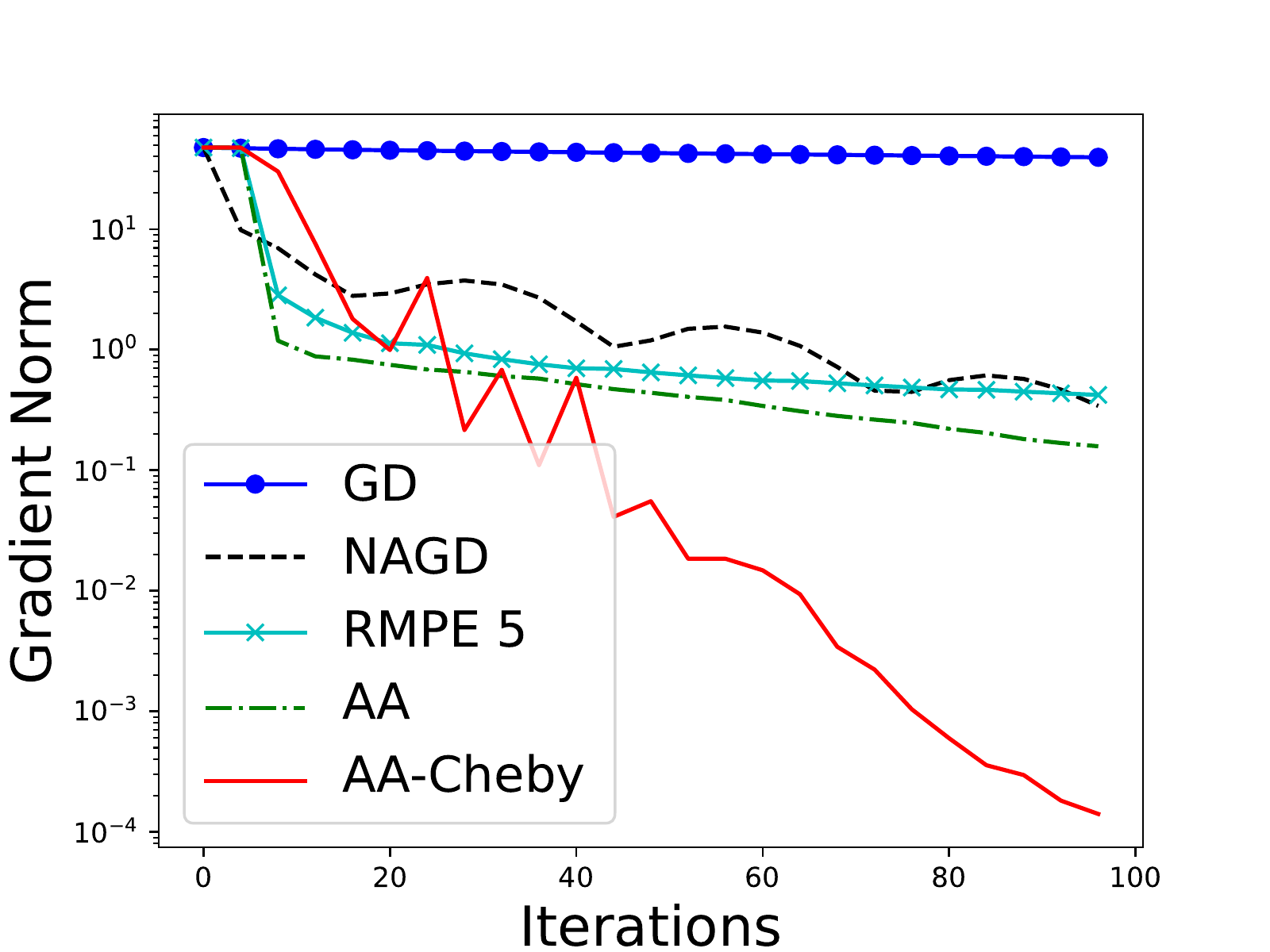}
	\end{minipage}%
	\begin{minipage}[htb]{0.25\textwidth}
		\includegraphics[width=\textwidth]{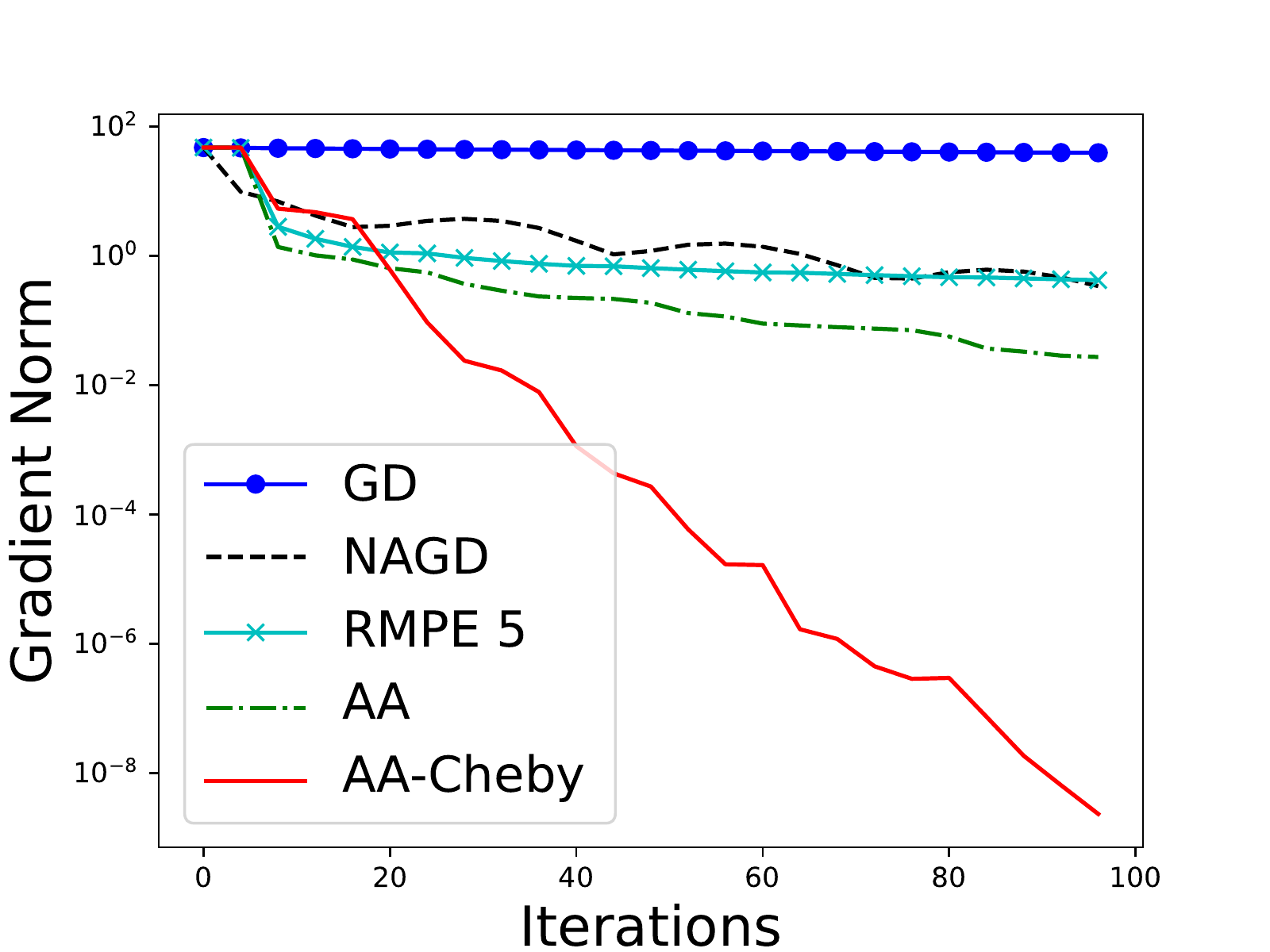}
	\end{minipage}\\
	\caption{$\kappa\in[500,2000]$; $m=3$ (left), $m=5$ (right)}
	\label{fig:3}\vspace{-3mm}
\end{figure}

\begin{figure}[!htb]
	\centering
	\begin{minipage}[htb]{0.25\textwidth}
		\includegraphics[width=\textwidth]{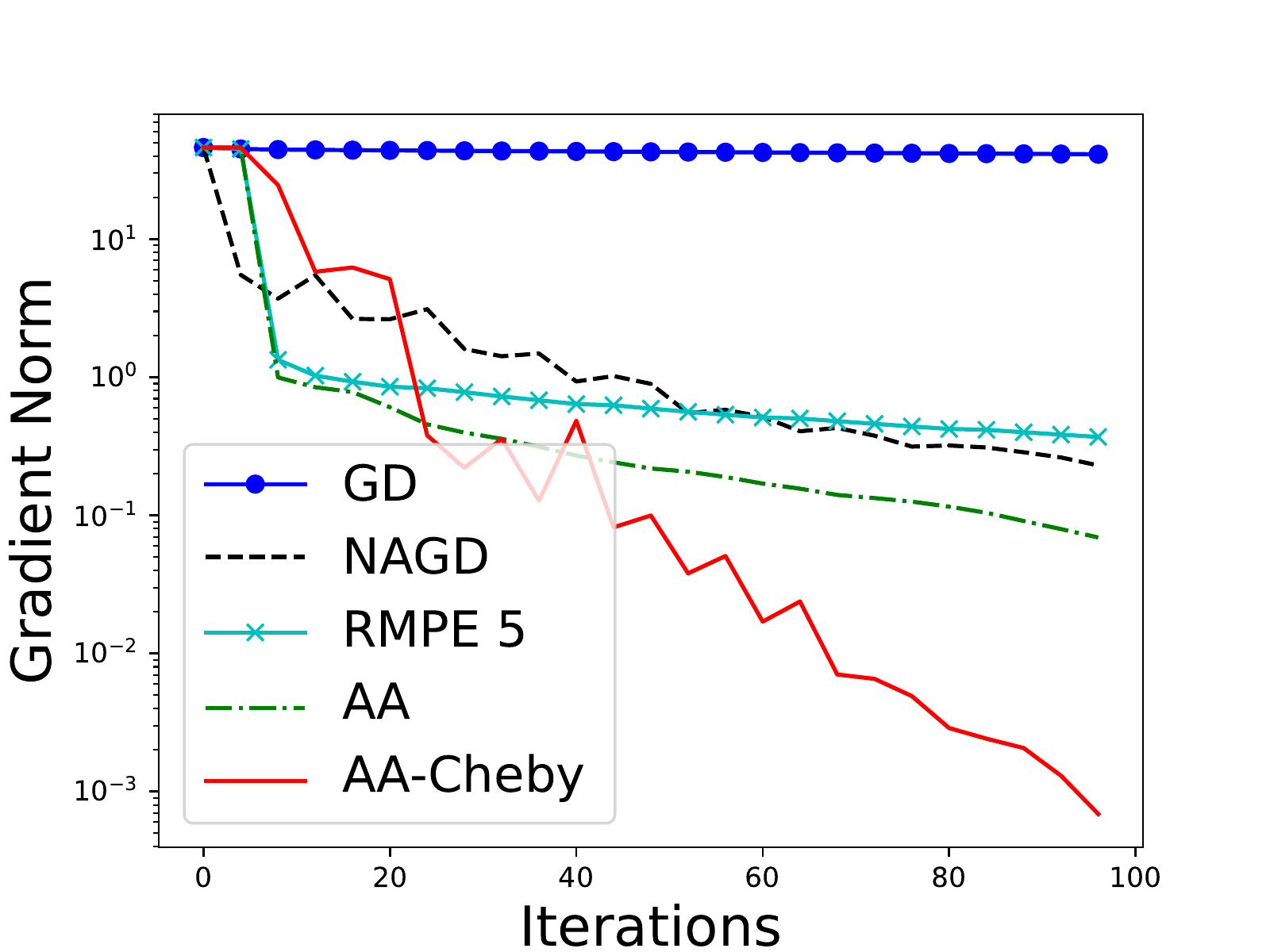}
	\end{minipage}%
	\begin{minipage}[htb]{0.25\textwidth}
		\includegraphics[width=\textwidth]{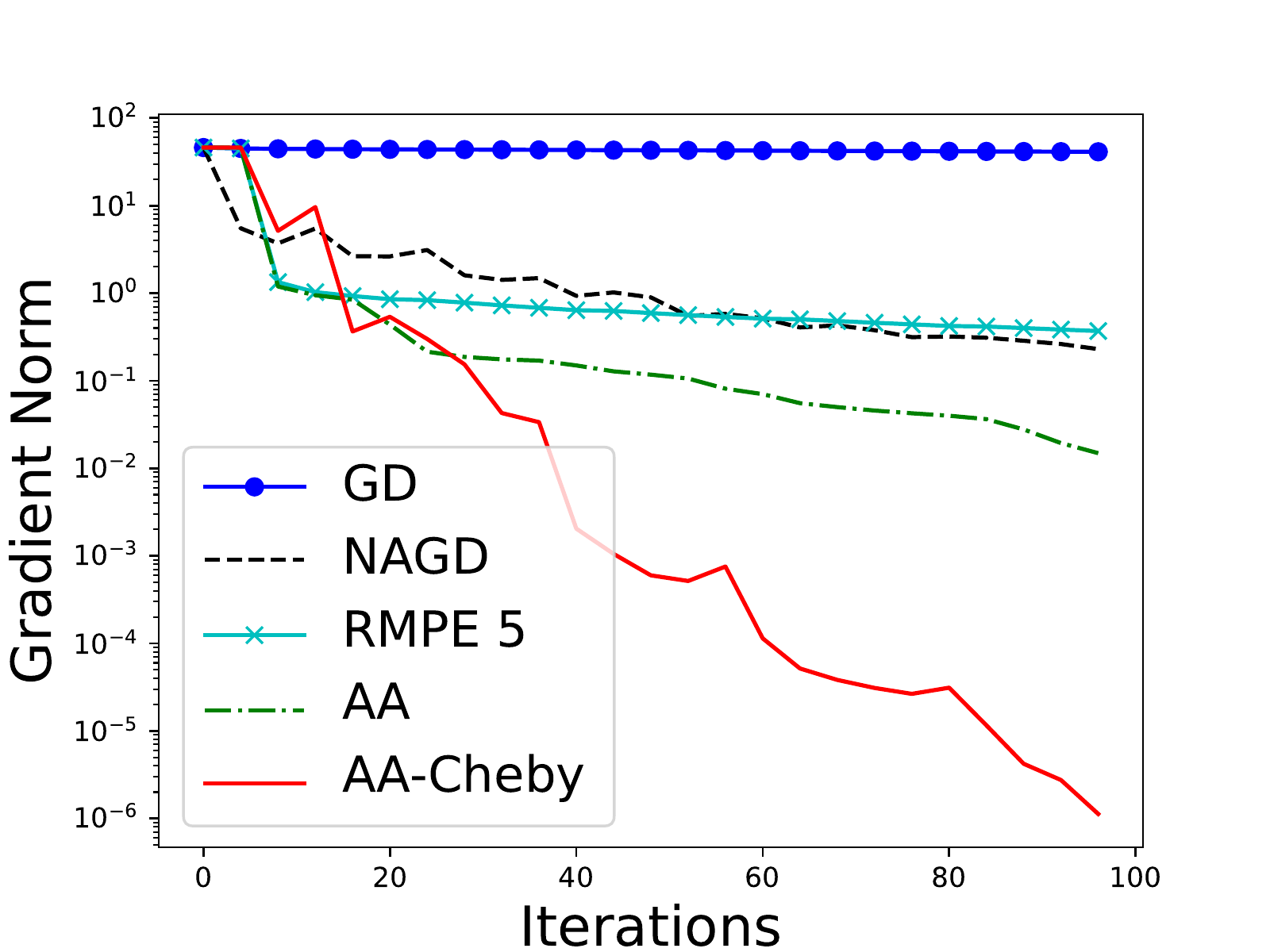}
	\end{minipage}\\
	\caption{$\kappa\in[2000,5000]$; $m=3$ (left), $m=5$ (right)}
	\label{fig:5}\vspace{-2mm}
\end{figure}

Figures~\ref{fig:1}--\ref{fig:5} demonstrate the convergence performance for the quadratic case, where $f(x)=\frac{1}{2}x^TAx-b^Tx$.
Concretely, we compared the convergence performance among these algorithms when the condition number $\kappa(A)$ and the mixing parameter $m$ are varied, e.g., the left figure in Figure \ref{fig:1} is the case $\kappa\in[0,500]$ and $m =3$. Recall that $m$ is the mixing parameter for Anderson acceleration algorithms (see Line 5 of Algorithm \ref{alg:am}).
We run these five algorithms on the synthetic datasets in which we randomly generate the $A$ and $b$ for the loss function $f$. Note that for randomly generated $A$ satisfying
the property of $A\in \mathcal{S}_{++}^d$, we randomly generate $B$ instead and let $A \triangleq B^TB$.

In conclusion, Anderson acceleration methods converge the fastest no matter it is a quadratic function or general function in all of our experiments.
The efficient Anderson acceleration methods can be viewed as the extension of momentum methods (e.g., NAGD) 
since GD is the special case of Anderson Acceleration with $m=0$, and to some extent NAGD can be viewed as $m=1$.
Combined with our theoretical results (i.e., optimal convergence rate in quadratic case and linear-quadratic convergence in general case), the experimental results validate that Anderson acceleration methods are efficient both in theory and practice.

\subsection{Experiments for Guessing Algorithm}
\label{app:expga}
In this section, we conduct the experiments for guessing the hyperparameters (i.e., $\mu, L$) dynamically using our Algorithm \ref{alg:guess}.
\begin{figure}[!htb]
	\centering
	\begin{minipage}[h]{0.25\textwidth}
		\includegraphics[width=\textwidth]{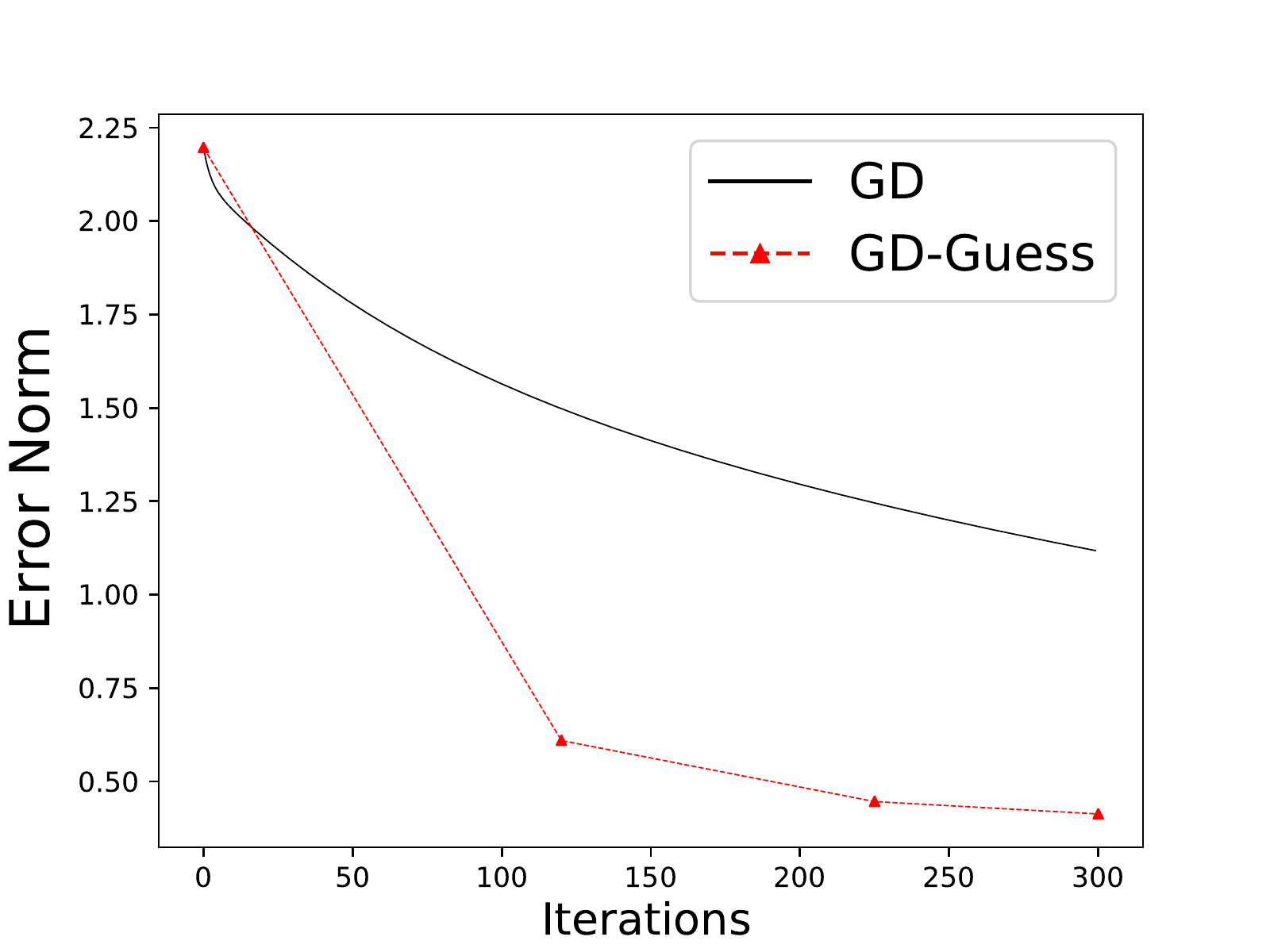}
		{ (a) Gradient Descent}
	\end{minipage}%
	\begin{minipage}[h]{0.25\textwidth}
		\includegraphics[width=\textwidth]{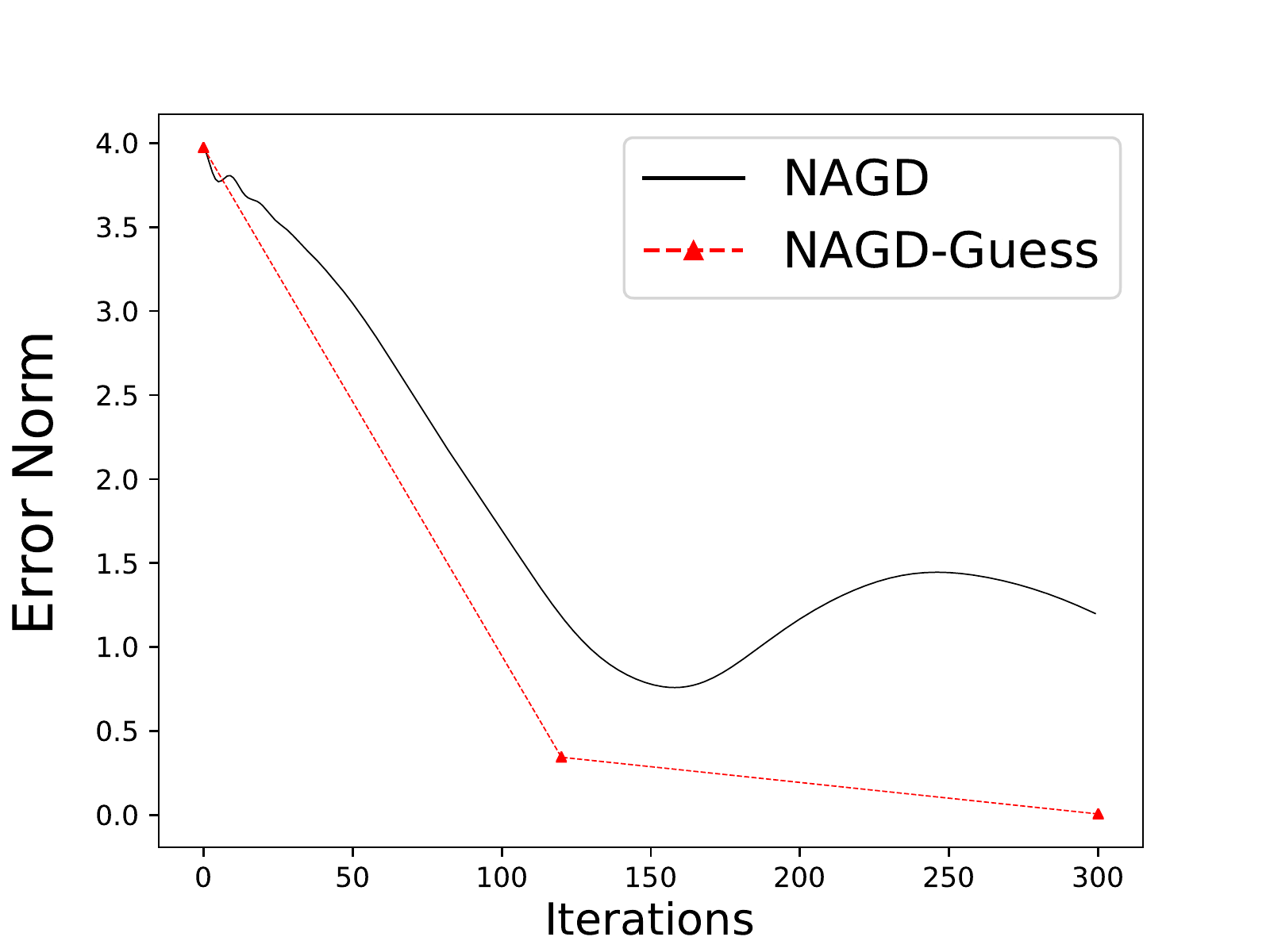}
		{\quad (b) Nesterov's AGD}
	\end{minipage}\\
	\begin{minipage}[h]{0.25\textwidth}
		\includegraphics[width=\textwidth]{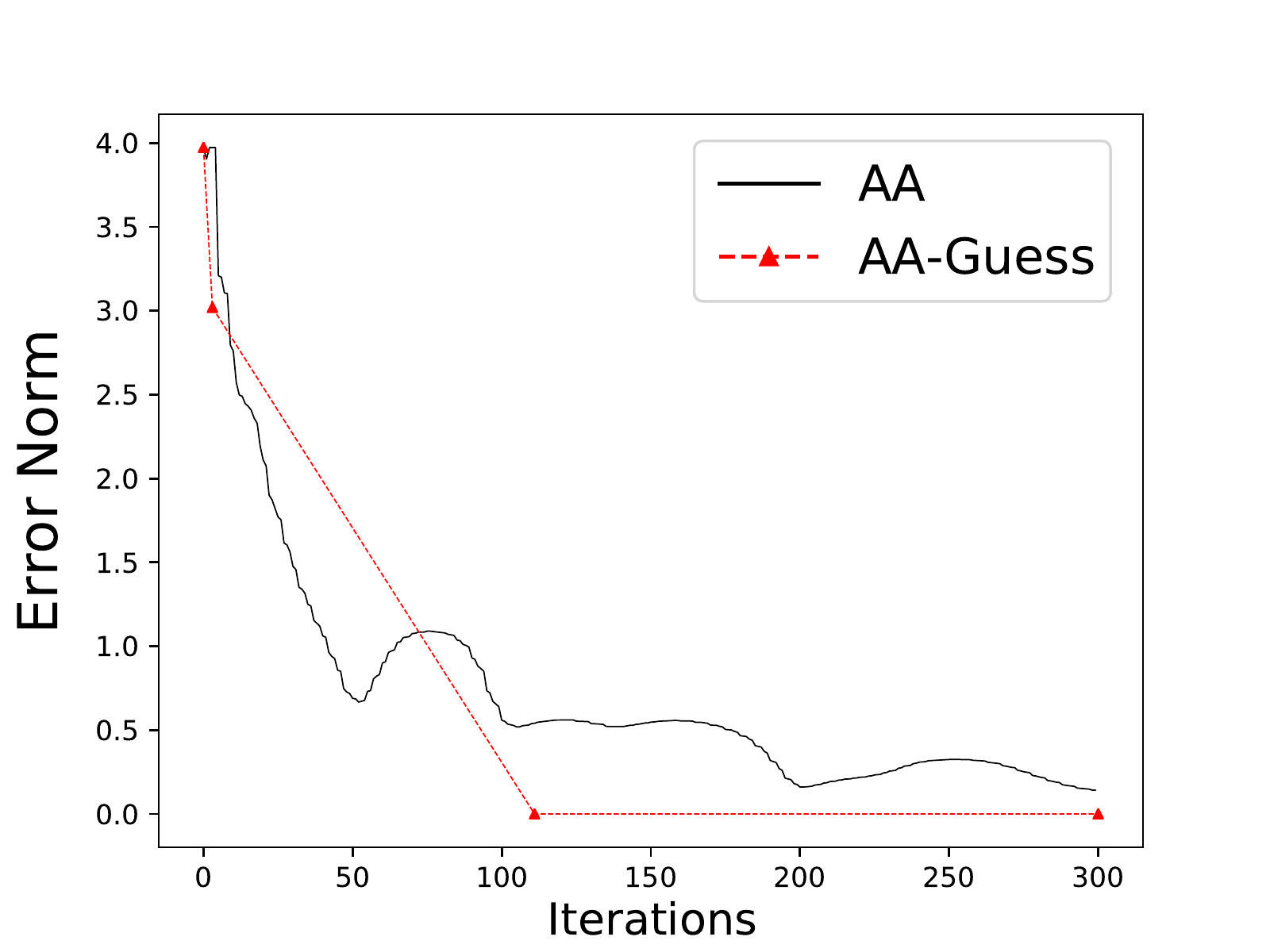}
		{\quad (c) Anderson Acceleration}
	\end{minipage}%
	\begin{minipage}[h]{0.25\textwidth}
		\includegraphics[width=\textwidth]{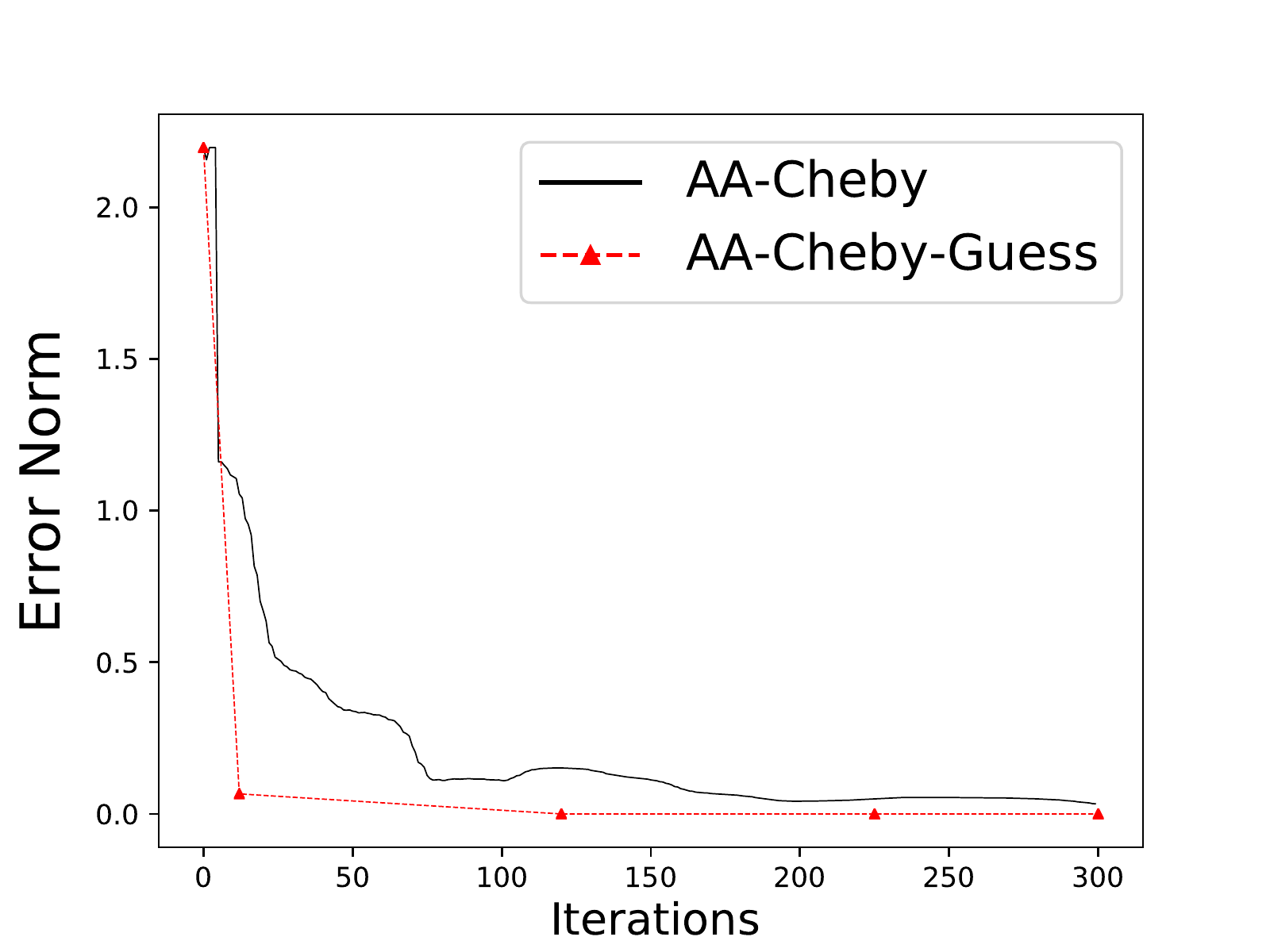}
		{\quad (d) Anderson-Chebyshev}
	\end{minipage}
	\caption{Algorithms with/without guessing algorithm}
	\label{fig:7}
\end{figure}

In Figure \ref{fig:7}, we separately consider these algorithms.
For each of them, we compare its convergence performance between its original version and the one combined with our guessing algorithm (Algorithm \ref{alg:guess}).
The experimental results show that all these four algorithms combined with our guessing algorithm achieve much better performance than their original versions.
Thus it validates our theoretical results (see Theorem \ref{thm:guess} and its following Remark).

\section{Conclusion}

In this paper, we prove that Anderson acceleration with Chebyshev polynomial can achieve the optimal convergence rate $O(\sqrt{\kappa}\ln\frac{1}{\epsilon})$, which improves the previous result $O(\kappa\ln\frac{1}{\epsilon})$ provided by \citep{toth2015convergence}.
Thus it can deal with ill-conditioned problems (condition number $\kappa$ is large) more efficiently.
Furthermore, we also prove the linear-quadratic convergence of Anderson acceleration for minimizing general nonlinear problems.
Besides, if the hyperparameters (e.g., the Lipschitz smooth parameter $L$) are not available, we propose a guessing algorithm for guessing them dynamically and also prove a similar convergence rate.
Finally, the experimental results demonstrate that the efficient Anderson acceleration methods converge significantly faster than other algorithms. 
This validates that Anderson-Chebyshev acceleration is efficient both in theory and practice.

\newpage
\subsubsection*{Acknowledgements}
Zhize was supported by the Office of Sponsored Research of KAUST, through the Baseline Research Fund of Prof. Peter Richt{\'a}rik.
Jian was supported in part by the National Natural Science Foundation of China Grant 61822203, 61772297, 61632016, 61761146003, and the Zhongguancun Haihua Institute for Frontier Information Technology and Turing AI Institute of Nanjing.
The authors also would like to thank Francis Bach, Claude Brezinski, Rong Ge, Damien Scieur, Le Zhang and anonymous reviewers for useful discussions and suggestions.

\bibliographystyle{plainnat}
\bibliography{ac}

\begin{thebibliography}{43}
\providecommand{\natexlab}[1]{#1}
\providecommand{\url}[1]{\texttt{#1}}
\expandafter\ifx\csname urlstyle\endcsname\relax
  \providecommand{\doi}[1]{doi: #1}\else
  \providecommand{\doi}{doi: \begingroup \urlstyle{rm}\Url}\fi

\bibitem[Aitken(1926)]{aitken1926bernoulli}
A~Aitken.
\newblock On bernoulli's numerical solution of algebraic equations.
\newblock \emph{Proceedings of the Royal Society of Edinburgh}, 46:\penalty0
  289--305, 1926.

\bibitem[Allen-Zhu(2017)]{allen2017katyusha}
Zeyuan Allen-Zhu.
\newblock Katyusha: the first direct acceleration of stochastic gradient
  methods.
\newblock In \emph{Proceedings of the 49th Annual ACM SIGACT Symposium on
  Theory of Computing}, pages 1200--1205. ACM, 2017.

\bibitem[Anderson(1965)]{anderson1965iterative}
Donald~G Anderson.
\newblock Iterative procedures for nonlinear integral equations.
\newblock \emph{Journal of the ACM}, 12\penalty0 (4):\penalty0 547--560, 1965.

\bibitem[Boyd and Vandenberghe(2004)]{boyd2004convex}
Stephen Boyd and Lieven Vandenberghe.
\newblock \emph{Convex optimization}.
\newblock Cambridge university press, 2004.

\bibitem[Brezinski(2000)]{brezinski2000convergence}
Claude Brezinski.
\newblock Convergence acceleration during the 20th century.
\newblock \emph{Journal of Computational and Applied Mathematics},
  122:\penalty0 1--21, 2000.

\bibitem[Brezinski and Redivo~Zaglia(1991)]{brezinski1991extrapolation}
Claude Brezinski and M~Redivo~Zaglia.
\newblock Extrapolation methods: theory and practice.
\newblock 1991.

\bibitem[Brezinski et~al.(2018)Brezinski, Redivo-Zaglia, and
  Saad]{brezinski2018shanks}
Claude Brezinski, Michela Redivo-Zaglia, and Yousef Saad.
\newblock Shanks sequence transformations and anderson acceleration.
\newblock \emph{SIAM Review}, 60\penalty0 (3):\penalty0 646--669, 2018.

\bibitem[Bubeck(2015)]{bubeck2015convex}
S{\'e}bastien Bubeck.
\newblock Convex optimization: Algorithms and complexity.
\newblock \emph{Foundations and Trends{\textregistered} in Machine Learning},
  8\penalty0 (3-4):\penalty0 231--357, 2015.

\bibitem[Capehart(1989)]{capehart1989techniques}
Steven~Russell Capehart.
\newblock \emph{Techniques for accelerating iterative methods for the solution
  of mathematical problems}.
\newblock PhD thesis, Oklahoma State University, 1989.

\bibitem[Evans et~al.(2018)Evans, Pollock, Rebholz, and Xiao]{evans2018proof}
Claire Evans, Sara Pollock, Leo~G Rebholz, and Mengying Xiao.
\newblock A proof that anderson acceleration increases the convergence rate in
  linearly converging fixed point methods (but not in quadratically converging
  ones).
\newblock \emph{arXiv preprint arXiv:1810.08455}, 2018.

\bibitem[Eyert(1996)]{eyert1996comparative}
V~Eyert.
\newblock A comparative study on methods for convergence acceleration of
  iterative vector sequences.
\newblock \emph{Journal of Computational Physics}, 124\penalty0 (2):\penalty0
  271--285, 1996.

\bibitem[Fang et~al.(2018)Fang, Li, Lin, and Zhang]{fang2018spider}
Cong Fang, Chris~Junchi Li, Zhouchen Lin, and Tong Zhang.
\newblock Spider: Near-optimal non-convex optimization via stochastic
  path-integrated differential estimator.
\newblock In \emph{Advances in Neural Information Processing Systems}, pages
  687--697, 2018.

\bibitem[Fang and Saad(2009)]{fang2009two}
Haw-ren Fang and Yousef Saad.
\newblock Two classes of multisecant methods for nonlinear acceleration.
\newblock \emph{Numerical Linear Algebra with Applications}, 16\penalty0
  (3):\penalty0 197--221, 2009.

\bibitem[Ge et~al.(2019)Ge, Li, Wang, and Wang]{ge2019stable}
Rong Ge, Zhize Li, Weiyao Wang, and Xiang Wang.
\newblock Stabilized svrg: Simple variance reduction for nonconvex
  optimization.
\newblock In \emph{Conference on Learning Theory}, 2019.

\bibitem[Golub and Van~Loan(1996)]{golub1996matrix}
GH~Golub and CF~Van~Loan.
\newblock Matrix computations.
\newblock \emph{3rd ed., The John Hopkins University Press, Baltimore, MD},
  1996.

\bibitem[Hageman and Young(2012)]{hageman2012applied}
Louis~A Hageman and David~M Young.
\newblock \emph{Applied iterative methods}.
\newblock Courier Corporation, 2012.

\bibitem[Higham and Strabi{\'c}(2016)]{higham2016anderson}
Nicholas~J Higham and Nata{\v{s}}a Strabi{\'c}.
\newblock Anderson acceleration of the alternating projections method for
  computing the nearest correlation matrix.
\newblock \emph{Numerical Algorithms}, 72\penalty0 (4):\penalty0 1021--1042,
  2016.

\bibitem[Hoerl and Kennard(1970)]{hoerl1970ridge}
Arthur~E Hoerl and Robert~W Kennard.
\newblock Ridge regression: Biased estimation for nonorthogonal problems.
\newblock \emph{Technometrics}, 12\penalty0 (1):\penalty0 55--67, 1970.

\bibitem[Lan and Zhou(2018)]{lan2018random}
Guanghui Lan and Yi~Zhou.
\newblock Random gradient extrapolation for distributed and stochastic
  optimization.
\newblock \emph{SIAM Journal on Optimization}, 28\penalty0 (4):\penalty0
  2753--2782, 2018.

\bibitem[Lan et~al.(2019)Lan, Li, and Zhou]{lan2019unified}
Guanghui Lan, Zhize Li, and Yi~Zhou.
\newblock A unified variance-reduced accelerated gradient method for convex
  optimization.
\newblock In \emph{Advances in Neural Information Processing Systems}, 2019.

\bibitem[LeCun et~al.(2015)LeCun, Bengio, and Hinton]{lecun2015deep}
Yann LeCun, Yoshua Bengio, and Geoffrey Hinton.
\newblock Deep learning.
\newblock \emph{nature}, 521\penalty0 (7553):\penalty0 436, 2015.

\bibitem[Lei et~al.(2017)Lei, Ju, Chen, and Jordan]{lei2017non}
Lihua Lei, Cheng Ju, Jianbo Chen, and Michael~I Jordan.
\newblock Non-convex finite-sum optimization via scsg methods.
\newblock In \emph{Advances in Neural Information Processing Systems}, pages
  2348--2358, 2017.

\bibitem[Lessard et~al.(2016)Lessard, Recht, and Packard]{lessard2016analysis}
Laurent Lessard, Benjamin Recht, and Andrew Packard.
\newblock Analysis and design of optimization algorithms via integral quadratic
  constraints.
\newblock \emph{SIAM Journal on Optimization}, 26\penalty0 (1):\penalty0
  57--95, 2016.

\bibitem[Li(2019)]{li2019ssrgd}
Zhize Li.
\newblock Ssrgd: Simple stochastic recursive gradient descent for escaping
  saddle points.
\newblock In \emph{Advances in Neural Information Processing Systems}, 2019.

\bibitem[Li and Li(2018)]{li2018simple}
Zhize Li and Jian Li.
\newblock A simple proximal stochastic gradient method for nonsmooth nonconvex
  optimization.
\newblock In \emph{Advances in Neural Information Processing Systems}, pages
  5569--5579, 2018.

\bibitem[Li et~al.(2019)Li, Zhang, Cheng, Zhu, and Li]{li2019stochastic}
Zhize Li, Tianyi Zhang, Shuyu Cheng, Jun Zhu, and Jian Li.
\newblock Stochastic gradient hamiltonian monte carlo with variance reduction
  for bayesian inference.
\newblock \emph{Machine Learning}, 108\penalty0 (8-9):\penalty0 1701--1727,
  2019.

\bibitem[Loffeld and Woodward(2016)]{loffeld2016considerations}
John Loffeld and Carol~S Woodward.
\newblock Considerations on the implementation and use of anderson acceleration
  on distributed memory and gpu-based parallel computers.
\newblock \emph{Advances in the Mathematical Sciences}, page 417, 2016.

\bibitem[Nesterov(2004)]{nesterov2014introductory}
Yurii Nesterov.
\newblock \emph{Introductory Lectures on Convex Optimization: A Basic Course}.
\newblock Kluwer, 2004.

\bibitem[Olshanskii and Tyrtyshnikov(2014)]{olshanskii2014iterative}
Maxim~A Olshanskii and Eugene~E Tyrtyshnikov.
\newblock \emph{Iterative methods for linear systems: theory and applications}.
\newblock SIAM, 2014.

\bibitem[Potra and Engler(2013)]{potra2013characterization}
Florian~A Potra and Hans Engler.
\newblock A characterization of the behavior of the anderson acceleration on
  linear problems.
\newblock \emph{Linear Algebra and its Applications}, 438\penalty0
  (3):\penalty0 1002--1011, 2013.

\bibitem[Pratapa et~al.(2016)Pratapa, Suryanarayana, and
  Pask]{pratapa2016anderson}
Phanisri~P Pratapa, Phanish Suryanarayana, and John~E Pask.
\newblock Anderson acceleration of the jacobi iterative method: An efficient
  alternative to krylov methods for large, sparse linear systems.
\newblock \emph{Journal of Computational Physics}, 306:\penalty0 43--54, 2016.

\bibitem[Rivlin(1974)]{rivlin1974chebyshev}
Theodore~J Rivlin.
\newblock \emph{The Chebyshev polynomials}.
\newblock Wiley, 1974.

\bibitem[Saad and Schultz(1986)]{saad1986gmres}
Youcef Saad and Martin~H Schultz.
\newblock Gmres: A generalized minimal residual algorithm for solving
  nonsymmetric linear systems.
\newblock \emph{SIAM Journal on scientific and statistical computing},
  7\penalty0 (3):\penalty0 856--869, 1986.

\bibitem[Scieur et~al.(2016)Scieur, d'Aspremont, and
  Bach]{scieur2016regularized}
Damien Scieur, Alexandre d'Aspremont, and Francis Bach.
\newblock Regularized nonlinear acceleration.
\newblock In \emph{Advances in Neural Information Processing Systems}, pages
  712--720, 2016.

\bibitem[Scieur et~al.(2018)Scieur, Oyallon, d'Aspremont, and
  Bach]{scieur2018nonlinear}
Damien Scieur, Edouard Oyallon, Alexandre d'Aspremont, and Francis Bach.
\newblock Nonlinear acceleration of deep neural networks.
\newblock \emph{arXiv preprint arXiv:1805.09639v1}, 2018.

\bibitem[Scieur et~al.(2019)Scieur, Oyallon, d'Aspremont, and
  Bach]{scieur2018nonlinearv2}
Damien Scieur, Edouard Oyallon, Alexandre d'Aspremont, and Francis Bach.
\newblock Online regularized nonlinear acceleration.
\newblock \emph{arXiv preprint arXiv:1805.09639v2}, 2019.

\bibitem[Shanks(1955)]{shanks1955non}
Daniel Shanks.
\newblock Non-linear transformations of divergent and slowly convergent
  sequences.
\newblock \emph{Studies in Applied Mathematics}, 34\penalty0 (1-4):\penalty0
  1--42, 1955.

\bibitem[Sidi et~al.(1986)Sidi, Ford, and Smith]{sidi1986acceleration}
Avram Sidi, William~F Ford, and David~A Smith.
\newblock Acceleration of convergence of vector sequences.
\newblock \emph{SIAM Journal on Numerical Analysis}, 23\penalty0 (1):\penalty0
  178--196, 1986.

\bibitem[Smith et~al.(1987)Smith, Ford, and Sidi]{smith1987extrapolation}
David~A Smith, William~F Ford, and Avram Sidi.
\newblock Extrapolation methods for vector sequences.
\newblock \emph{SIAM review}, 29\penalty0 (2):\penalty0 199--233, 1987.

\bibitem[Toth and Kelley(2015)]{toth2015convergence}
Alex Toth and CT~Kelley.
\newblock Convergence analysis for anderson acceleration.
\newblock \emph{SIAM Journal on Numerical Analysis}, 53\penalty0 (2):\penalty0
  805--819, 2015.

\bibitem[Walker and Ni(2011)]{walker2011anderson}
Homer~F Walker and Peng Ni.
\newblock Anderson acceleration for fixed-point iterations.
\newblock \emph{SIAM Journal on Numerical Analysis}, 49\penalty0 (4):\penalty0
  1715--1735, 2011.

\bibitem[Zhang et~al.(2018)Zhang, O'Donoghue, and Boyd]{zhang2018globally}
Junzi Zhang, Brendan O'Donoghue, and Stephen Boyd.
\newblock Globally convergent type-i anderson acceleration for non-smooth
  fixed-point iterations.
\newblock \emph{arXiv preprint arXiv:1808.03971}, 2018.

\bibitem[Zhou et~al.(2018)Zhou, Xu, and Gu]{zhou2018stochastic}
Dongruo Zhou, Pan Xu, and Quanquan Gu.
\newblock Stochastic nested variance reduction for nonconvex optimization.
\newblock \emph{arXiv preprint arXiv:1806.07811}, 2018.

\end{thebibliography}

\onecolumn
\newpage
\appendix

\section{GMRES vs. Anderson Acceleration($m =\infty$)}
\label{app:gam}

In this appendix, in order to better understand the efficient Anderson acceleration method, we review the equivalence between the well-known Krylov subspace method GMRES \citep{saad1986gmres} and Anderson acceleration without truncation (i.e., $m =\infty$ or large enough in Line 5 of Algorithm \ref{alg:am}) in linear case.
We emphasize that in this paper we focus on the more general hard cases where $m$ is small (since $m$ usually is finite and not very large in practice) and also general nonlinear case.

Consider the problem of solving the linear system $Ax=b$, with a nonsingular matrix $A$. This is equivalent to solving the fixed point $x=G(x)=x-\nabla f(x)$, where $\nabla f(x)=Ax-b$. Let $r_i$ denote the residual in the point $x_i$, i.e., $r_i=b-Ax_i$.
The GMRES method is an effective iterative method for linear system which has the property of minimizing the norm of the residual vector over a Krylov subspace at every step.
\begin{align}\label{eq:tgmres}
x_t^{\Gm} = \arg\min\{\|b-Ax\|_2: x=x_0+y, y\in \mathcal{K}_t\}
\end{align}
Note that the Krylov space
$\mathcal{K}_t$ is the linear span of the first $t$ gradients
and $\mathcal{K}_n$ can span the whole space $\mathbb{R}^n$. Hence the method arrives the exact solution after $n$ iteration. It is also theoretically equivalent to the Generalized Conjugate Residual method (GCR).

Now we show that $x_{t+1}^{\Am}=G(x_t^{\Gm})$ to indicate the equivalence, under the assumption $0<\|r_i\|_2<\|r_{i-1}\|_2$ for $1\leq i\leq t$. $x_t^{\Gm}$ and $x_{t+1}^{\Am}$ denote the $t$-th GMRES iterative point and $t+1$-th Anderson Acceleration iterative point, respectively.
Let mixing parameters $\beta_t=1$ for all $t$.
Then, we deduce the $x_{t+1}^{\Am}$ as follows:
\begin{align}
x_{t+1}^{\Am}&=\sum_{i=0}^{t}{\alpha_i^t G(x_i)} \qquad \because m_t=t\\
&=\sum_{i=0}^{t}{\alpha_i^t x_i}+\sum_{i=0}^{t}{\alpha_i^t(G(x_i)-x_i)}\\
&=\sum_{i=0}^{t}{\alpha_i^t x_i}+ \sum_{i=0}^{t}{\alpha_i^t F_i} \label{eq:min}
\end{align}
Note that the second term in (\ref{eq:min}) is the same as we minimized in Line 7 of Algorithm \ref{alg:am}.
This step also can be transformed to an unconstrained version as follows:
\begin{equation}\label{eq:r0}
\min_{(\alpha_1^t,\ldots,\alpha_{t}^t)^T}\|F_0-\sum_{i=1}^{t}{\alpha_i^t (F_0-F_{i})}\|_2
\end{equation}
The $\alpha_0^t$ equals to $1-\sum_{i=1}^{t}{\alpha_i^t}$.
Note that $F_0-F_{i}=b-Ax_0-(b-Ax_i)=A(x_i-x_0)$ and $F_0=r_0=b-Ax_0$. Replacing these equations into (\ref{eq:r0}), we have

\begin{align}
&\min_{(\alpha_1^t,\ldots,\alpha_{t}^t)^T}\|F_0-\sum_{i=1}^{t}{\alpha_i^t (F_0-F_{i})}\|_2  \label{eq:tam1}\\
=&\min_{(\alpha_1^t,\ldots,\alpha_{t}^t)^T}\|b-Ax_0-\sum_{i=1}^{t}{\alpha_i^t A(x_i-x_0)}\|_2\\
=&\min_{(\alpha_1^t,\ldots,\alpha_{t}^t)^T}\|b-A\Big(x_0+\sum_{i=1}^{t}{\alpha_i^t(x_i-x_0)}\Big)\|_2 \label{eq:tam}
\end{align}

Comparing (\ref{eq:tam}) with (\ref{eq:tgmres}), if $\{y_i=(x_i-x_0): 1\leq i\leq t\}$ form a basis of Krylov subspace $\mathcal{K}_t$, then we have the following equations (easily from (\ref{eq:tam1})-(\ref{eq:tgmres})). Note that the Krylov subspaces $\mathcal{K}$ are defined by $(r_0,A)$, i.e., $\mathcal{K}_i=\mathrm{span}\{r_0,Ar_0,\ldots,A^{i-1}r_0\}$.
\begin{align}
x_t^{\Gm}&=x_0+\sum_{i=1}^{t}{\alpha_i^t(x_i-x_0)}\\ r_t^{\Gm}&=b-Ax_t^{\Gm}=F_0-\sum_{i=1}^{t}{\alpha_i^t (F_0-F_{i})} \label{eq:rt}
\end{align}
Now we continue to deduce the $x_{t+1}^{\Am}$ from (\ref{eq:min}) to finish the proof of equivalence.
\begin{align}
x_{t+1}^{\Am} &=\sum_{i=0}^{t}{\alpha_i^t x_i}+ \sum_{i=0}^{t}{\alpha_i^t F_i}\\
&=x_0+\sum_{i=1}^{t}{\alpha_i^t (x_i-x_0)}+ F_0-\sum_{i=1}^{t}{\alpha_i^t (F_0-F_{i})}
\label{eq:t+1}\\
&=x_t^{\Gm}+b-Ax_t^{\Gm}\\
&=x_t^{\Gm}-\nabla f(x_t^{\Gm})\\
&=G(x_t^{\Gm})
\end{align}
Now the only remaining thing is to show that $\{y_i=(x_i-x_0): 1\leq i\leq t\}$ form the basis of Krylov subspace $\mathcal{K}_t$. This can be proved by induction. For $t=1$, $y_1=x_1-x_0=G(x_0)-x_0=x_0-(Ax_0-b)-x_0=r_0$. Now, assuming that $\{y_i=(x_i-x_0): 1\leq i\leq t\}$ form the basis of $\mathcal{K}_t$, we show that
\begin{align}
y_{t+1}&=x_{t+1}-x_0 \notag\\
&=\sum_{i=1}^{t}{\alpha_i^t (x_i-x_0)}+ F_0-\sum_{i=1}^{t}{\alpha_i^t (F_0-F_{i})} \label{eq:abcd}\\
&=\sum_{i=1}^{t}{\alpha_i^t y_i}+r_t^{\Gm}, \label{eq:lastk}
\end{align}
where (\ref{eq:abcd}) follows from (\ref{eq:t+1}), and (\ref{eq:lastk}) follows from (\ref{eq:rt}).
The first term in (\ref{eq:lastk}) belongs to $\mathcal{K}_t$ by induction. The second term $r_t^{\Gm}\in\mathcal{K}_{t+1}$ (From~(\ref{eq:tgmres})) and $r_t^{\Gm}\not\in\mathcal{K}_t$ since the assumption $0<\|r_i\|_2<\|r_{i-1}\|_2$ for $1\leq i\leq t$. Hence $y_{t+1}\in\mathcal{K}_{t+1}$.

\section{Missing Proofs}\label{app:pf}
In this appendix, we provide the proof details for Theorem \ref{thm:gel} (Appendix \ref{app:pfthm1}), Lemma \ref{lm:boundft1} (Appendix \ref{app:pflm2}) and Theorem \ref{thm:guess} (Appendix \ref{app:pfthm3}).

\subsection{Proof of Theorem \ref{thm:gel}}
\label{app:pfthm1}
For the iteration $t+1$, we have $F_t=F(x_t)=-\frac{2}{L+\mu}\nabla f(x_t)$ according to $\lambda =\frac{2}{L+\mu}$, where $\mu$ and $L$ are defined in Assumption \ref{asp:1}.
First, we recall the form of $F_{t+1}$ (i.e. (\ref{eq:sf2})) as
\begin{align}
F_{t+1}
&= G_{t+1} - \sum_{i=0}^{m_t}{\alpha_i^t G_{t-i}} + (1-\beta_t)\mathcal{F}, \label{eq:f2}
\end{align}
and the definition of  $\mathcal{F}$ as
\begin{equation}\label{eq:deff}
\mathcal{F} \triangleq \sum_{i=0}^{m_t}{\alpha_i^tF_{t-i}}.
\end{equation}
Now we bound the first two terms in RHS of (\ref{eq:f2}) by combining the first and third term of (\ref{eq:sint}) as follows:
\begin{align}
&G_{t+1} - \sum_{i=0}^{m_t}{\alpha_i^t G_{t-i}}  \notag\\
&= \sum_{i=1}^{m_t}{\alpha_i^t}\int_0^1\biggl(G'\Bigl(x_t+u(x_{t+1}-x_t)\Bigr)
- G'\Bigl(x_t+u(x_{t-i}-x_t)\Bigr)\biggr)(x_{t-i}-x_t)\,du   \notag\\
&\qquad + \int_0^1G'\Bigl(x_t+u(x_{t+1}-x_t)\Bigr)\beta_t\mathcal{F}\,du. \label{eq:int}
\end{align}
To bound the Equation (\ref{eq:int}), we recall that $G_t=G(x_t)=x_t+F_t$ and $F_t=-\frac{2}{L+\mu}\nabla f(x_t)$. Hence
\begin{equation}\label{eq:gp}
G_t' = I+F_t' = I - \frac{2}{L+\mu}\nabla^2f(x_t).
\end{equation}
Due to the Lipschitz continuity of Hessian $\nabla^2f$ (see (\ref{asp3})), we have
\begin{align}
\|G'(x) - G'(y)\| &= \frac{2}{L+\mu}\|\nabla^2f(x) - \nabla^2f(y)\| \notag \\
& \leq \frac{2\gamma}{L+\mu} \|x-y\|.
\end{align}
Now the first term in (\ref{eq:int}) can be bounded as follows:
\begin{align}
&\sum_{i=1}^{m_t}{\alpha_i^t}\int_0^1\biggl(G'\Bigl(x_t+u(x_{t+1}-x_t)\Bigr)
- G'\Bigl(x_t+u(x_{t-i}-x_t)\Bigr)\biggr)(x_{t-i}-x_t)\,du   \notag\\
&\leq \sum_{i=1}^{m_t}{\alpha_i^t}\frac{\gamma\|x_{t+1}-x_{t} -(x_{t-i}-x_t)\|\|x_{t-i}-x_t\|}{L+\mu}.
\label{eq:int1}
\end{align}
Using (\ref{eq:x2}) to replace $x_{t+1}$ and combining with (\ref{eq:deff}), we have
\begin{align}
&\|x_{t+1}-x_{t} -(x_{t-i}-x_t)\|  \notag\\
&= \|\sum_{i=1}^{m_t}{\alpha_i^t (x_{t-i}-x_{t})} +\beta_t\mathcal{F}  -(x_{t-i}-x_t)\| \notag\\
&\leq (\sqrt{m}\n{\alpha}+1)\max_{i\in[1,m]}{\|x_t-x_{t-i}\|} +\beta_t\|\mathcal{F}\| \label{eq:xx}\\
&= (\sqrt{m}\n{\alpha}+1)\Delta_t +\beta_t\|\mathcal{F}\|, \label{eq:int1b}
\end{align}
where (\ref{eq:xx}) uses triangle inequality and Cauchy–Schwarz inequality.
Now, pluging (\ref{eq:int1b}) into (\ref{eq:int1}), we get
\begin{align}
(\ref{eq:int1}) \leq \frac{\sqrt{m}\n{\alpha}\gamma((\sqrt{m}\n{\alpha}+1)\Delta_t +\beta_t\|\mathcal{F}\|)\Delta_t}{L+\mu}, \label{eq:int3}
\end{align}
where (\ref{eq:int3}) uses Cauchy–Schwarz inequality.
Then, we bound the second term in (\ref{eq:int}) as follows:
\begin{align}
&\int_0^1G'\Bigl(x_t+u(x_{t+1}-x_t)\Bigr)\beta_t\mathcal{F}\,du \notag\\
&=\int_0^1\biggl(I-\frac{2}{L+\mu}\nabla^2f\Bigl(x_t+u(x_{t+1}-x_t)\Bigr)\biggr)\beta_t\mathcal{F}\,du \notag \\
& \leq \Bigl(1-\frac{2\mu}{L+\mu}\Bigr)\beta_t\|\mathcal{F}\|. \label{eq:int2}
\end{align}
Now, we recall $F_{t+1}$ here:
\begin{align*}
F_{t+1} = G_{t+1} - \sum_{i=0}^{m_t}{\alpha_i^t G_{t-i}} + (1-\beta_t)\mathcal{F}
\quad\text{same as}~(\ref{eq:f2})
\end{align*}
Then, according to (\ref{eq:int}), (\ref{eq:int1}), (\ref{eq:int3}) and (\ref{eq:int2}), we have
\begin{align*}
\|F_{t+1}\| &\leq  \frac{\sqrt{m}\n{\alpha}\gamma((\sqrt{m}\n{\alpha}+1)\Delta_t +\beta_t\|\mathcal{F}\|)\Delta_t}{L+\mu}
+ \Bigl(1-\frac{2\mu}{L+\mu}\Bigr)\beta_t\|\mathcal{F}\|
+(1-\beta_t)\|\mathcal{F}\| \\
&= \frac{\gamma(m\ns{\alpha}+\sqrt{m}\n{\alpha})\Delta_t^2}{L+\mu} + \frac{\gamma\sqrt{m}\n{\alpha}\beta_t\Delta_t\|\mathcal{F}\|}{L+\mu}
+\Bigl(1-\frac{2\mu}{L+\mu}\beta_t\Bigr)\|\mathcal{F}\|.
\end{align*}
Recall that $F_t=-\frac{2}{L+\mu}\nabla f(x_t)$. According to (\ref{eq:deff}) and (\ref{eq:transt}), we have $\|\mathcal{F}\| \leq  \|F_t\|$. Thus, we have
\begin{align}
\|\nabla f(x_{t+1})\| &\leq \frac{\gamma (m\ns{\alpha}+\sqrt{m}\n{\alpha}) \Delta_t^2}{2}
+ \frac{\gamma\sqrt{m}\n{\alpha}\beta_t\Delta_t\|\nabla f(x_t)\|}{L+\mu} \notag\\
&\qquad
+ \Bigl(1-\frac{2\mu}{L+\mu}\beta_t\Bigr)\|\nabla f(x_t)\|. \label{eq:finalxx}
\end{align}
Now, we bound $\n{\alpha}\leq\frac{2\tilde{\kappa}}{L+\mu}$ to finish the proof for Theorem \ref{thm:gel}.
First we recall that the $\alpha$ satisfies problem (\ref{eq:transt}), i.e., $\alpha =\arg\min_{\alpha} \|F_t- \mathcal{B}\alpha\|_2$. We use the QR decomposition for $\mathcal{B}$ and let $\mathcal{B}=QR$, where $Q^TQ=I$ and $R$ is an upper triangular matrix.  Then we let $\tilde{R}$ denote the upper nonzeros of $R$, and $\tilde{Q}$ is the
matrix with the corresponding columns of $Q$. Then $\tilde{R}\alpha = \tilde{Q}^TF_t$ and $\alpha = \tilde{R}^{-1}\tilde{Q}^TF_t$. Hence, we have
\begin{align}
\|\alpha\| = \|\tilde{R}^{-1}\tilde{Q}^TF_t\| \leq \|\tilde{R}^{-1}\|\|\tilde{Q}^TF_t\|
\leq \|\tilde{R}^{-1}\|\|{Q}^TF_t\| \leq 2\kappa/(L+\mu), \label{eq:theta}
\end{align}
where (\ref{eq:theta}) uses $F_t=-\frac{2}{L+\mu}\nabla f(x_t)$ and $\tilde{\kappa}=\|\nabla f(x_t)\|/\tilde{\mu}$ (where $\tilde{\mu}$ denotes the least non-zero singular value of $\tilde{R}$).
The proof for Theorem \ref{thm:gel} is finished by plugging (\ref{eq:theta}) into (\ref{eq:finalxx}).
\QEDB

\subsection{Proof of Lemma \ref{lm:boundft1}}
\label{app:pflm2}
First, we obtain the relation between $F_{t+1}$ and $F_1$ by using the Singular Value Decomposition (SVD) for the small matrix $B_t$ for all $t$.

Concretely, Let $\alpha_0^t= 1-\sum_{i=1}^{m_t}{\alpha_i^t}$. Recall that $B_t$ denotes $[F_{t}-F_{t-1}, \dotsc, F_t-F_{t-m_t}]$, i.e.~a matrix with column vectors are $F_t-F_{t-i}$ for $1\leq i\leq m_t$.
Then we adopt the SVD of $B_t$ as $\tilde{U}_t\Sigma_t \tilde{V}^T_t$, where $\tilde{U}^T_t\tilde{U}_t=I$, $\tilde{V}^T_t\tilde{V}_t=I$ and $\Sigma_t=\mathrm{\mathbf{diag}}(\sigma_1,\ldots,\sigma_r)$, $r=\mathrm{\mathbf{rank}}(B_t)$. Then $B^\dag_t = \tilde{V}_t\Sigma_t^{-1} \tilde{U}^T_t $.
Although $B_t$ may have dependent columns, one solution for (\ref{eq:transt}) is that $\alpha^*=B^\dag_tF_t$, where $\alpha^*=(\alpha_1^t,\ldots,\alpha_{m_t}^t)^T$ is the vector of coefficients in (\ref{eq:transt}). Therefore, $\sum_{i=0}^{m_t}{\alpha_i^t F_{t-i}}$ can be represented as $F_t-B_tB^\dag_tF_t=F_t-\tilde{U}_t\tilde{U}^T_tF_t$.

Let $P_t=I-\tilde{U}_t\tilde{U}^T_t$. The matrix $P_t$ is a projection matrix since $P_tP_t=(I-\tilde{U}_t\tilde{U}^T_t)(I-\tilde{U}_t\tilde{U}^T_t)=I-\tilde{U}_t\tilde{U}^T_t$. Also $P_t$ is symmetric. So we finally have $F_{t+1}=(I-\beta_tA)P_tF_t$. Expanding $F_t$ recursively, we get the following relation
\begin{equation}
F_{t+1}=(I-\beta_tA)P_t\dotsm (I-\beta_1A)P_1F_1. \label{eq:expand2}
\end{equation}
We can further have $\|P_j\|_2\le 1$, for $1\le j\le t$. This is due to the following fact
\begin{align*}
\|P_jx\|_2^2=(P_jx)^T(P_jx)&=x^TP_j^TP_jx =x^TP_jx\le \|x\|_2\|P_jx\|_2.
\end{align*}
As $A\in \mathcal{S}_{++}^d$, $A=Q\Lambda Q^T$, where $Q^TQ=I$, and $\Lambda =\mathrm{\mathbf{diag}}(\lambda_1,\dotsc,\lambda_n)$ ($\lambda_j$'s are the real eigenvalues of $A$).

Now, we need to bound $\|F_{t+1}\|_2$. According to (\ref{eq:expand2}), we have
\begin{equation}
\begin{split}\label{eq:grad}
\|F_{t+1}\|_2&=\|(I-\beta_tA)P_t\dotsm (I-\beta_1A)P_1F_1\|_2\\
&\leq\|(I-\beta_tA)P_t\dotsm (I-\beta_1A)P_1\|_2\|F_1\|_2.
\end{split}
\end{equation}
It is sufficient to bound $\|(I-\beta_tA)P_t\dotsm (I-\beta_1A)P_1\|_2$, which is
\begin{equation}
\sup_{\|x\|_2=1}\|(I-\beta_tA)P_t\dotsm (I-\beta_1A)P_1x\|_2. \label{eq:bound}
\end{equation}

Denote the column vectors of $Q$ as $v_1,\dotsc,v_d$ (they are the eigenvectors of $A$). The vector $x$ can be represented as $\sum_{j=1}^dc_{0,j}v_j$, for some $c_{0,j}$'s with $\sum_{j=1}^d c^2_{0,j}=1$. Hence $P_1x$ can be represented as $P_1x=\sum_{j=1}^{n}c_{1,j}v_j$. As $\|P_1\|_2\le 1$, the $c_{1,j}$'s satisfy $\sum_{j=1}^d c^2_{1,j}\le 1$. With $P_1x$, we know $(I-\beta_1A)P_1x=\sum_{j=1}^dc_{1,j}(1-\beta_1\lambda_j)v_j$, where $\sum_j c^2_{1,j}\le 1$. Iteratively expanding, we get $(I-\beta_tA)P_t\dotsm (I-\beta_1A)P_1x= \sum_{j=1}^{d}c_{t,j}(1-\beta_t\lambda_j)\dotsm (1-\beta_1\lambda_j)v_j$, where $\sum_j c^2_{t,j}\le (1+\frac{1}{\sqrt{\kappa}+1})^t$. Hence we have
\begin{equation}\label{eq:chebsp}
(\ref{eq:bound})\le \Big(1+\frac{1}{\sqrt{\kappa}+1}\Big)^t\min_{\beta}\max_{\lambda\in \text{sp}(A)}|H_t(\lambda)|,
\end{equation}
where $H_t(\lambda) =(1-\beta_t\lambda)\dotsm(1-\beta_1\lambda)$ is a degree $t$ polynomial and the $\text{sp}(A)$ is the eigenvalue spectrum of $A$.
As in general, the eigenvalues of $A$ is unknown. We look for the bound of the following form (\ref{eq:chebh2}) instead of (\ref{eq:chebsp}),
\begin{equation}\label{eq:chebh2}
(\ref{eq:chebsp})\leq \Big(1+\frac{1}{\sqrt{\kappa}+1}\Big)^t\min_{\beta}\max_{\lambda\in [\mu,L]}|H_t(\lambda)|.
\end{equation}
Finally, combining (\ref{eq:grad}), (\ref{eq:bound}), (\ref{eq:chebsp}), (\ref{eq:chebh2}), (\ref{eq:ptx}), (\ref{eq:thx1}) and the fact
\[
\Big(1+\frac{1}{\sqrt{\kappa}+1}\Big)^t\Big(1-\frac{2}{\sqrt{\kappa}+1}\Big)^{t/2}\leq 1,
\]
we finish the proof, i.e.,
\[
\|F_{t+1}\|_2/\|F_1\|_2 \leq \sqrt{2\min_{\beta}\max_{\lambda\in [\mu,L]}|H_t(\lambda)|}.
\]
\QEDB

\subsection{Proof of Theorem \ref{thm:guess}}\label{app:pfthm3}

Before to prove Theorem \ref{thm:guess}, we need the following three lemmas.
\begin{lemma}
	\label{lm:boundsum}
	If $\sum_{j=1}^k{e^{i_j}}=e^{i_1}+e^{i_2}+\ldots+e^{i_k}=T$,
	then $\sum_{j=1}^k{i_j}\leq k\ln\frac{T}{k}$.
\end{lemma}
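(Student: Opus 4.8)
The plan is to invoke the arithmetic–geometric mean inequality (equivalently, concavity of $\ln$ or Jensen's inequality) applied to the $k$ positive reals $e^{i_1}, e^{i_2}, \dotsc, e^{i_k}$. This is exactly the right tool because the hypothesis controls the \emph{sum} of the $e^{i_j}$ while the conclusion concerns the \emph{sum} of the $i_j$, and the logarithm converts one into the other.

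First I would write down AM--GM for these numbers:
\[
\frac{1}{k}\sum_{j=1}^k e^{i_j} \;\geq\; \Big(\prod_{j=1}^k e^{i_j}\Big)^{1/k}.
\]
By the hypothesis $\sum_{j=1}^k e^{i_j}=T$, the left-hand side equals $T/k$. On the right-hand side, $\prod_{j=1}^k e^{i_j} = e^{\sum_{j=1}^k i_j}$, so the right-hand side equals $e^{(\sum_{j=1}^k i_j)/k}$. Hence
\[
\frac{T}{k} \;\geq\; e^{\frac{1}{k}\sum_{j=1}^k i_j}.
\]

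Finally I would take the natural logarithm of both sides (monotone), obtaining $\ln\frac{T}{k} \geq \frac{1}{k}\sum_{j=1}^k i_j$, and multiply through by $k$ to get the claimed bound $\sum_{j=1}^k i_j \leq k\ln\frac{T}{k}$. There is essentially no obstacle here: the only thing worth remarking is that equality holds precisely when all the $i_j$ are equal (so that each $e^{i_j}=T/k$), which is consistent with how the lemma is used later to bound the worst case in the guessing algorithm's inner loop.
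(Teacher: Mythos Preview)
Your proof is correct and is essentially the same as the paper's: the paper applies Jensen's inequality to the convex function $g(x)=e^x$ to get $\exp\bigl(\tfrac{1}{k}\sum_j i_j\bigr)\le \tfrac{1}{k}\sum_j e^{i_j}=T/k$ and then takes logarithms, which is exactly your AM--GM argument phrased in the equivalent form you yourself flagged at the outset.
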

\begin{proof}
	Let $g(x)=e^x$. Note that $g(x)$ is a convex function. According to Jensen's inequality, the following holds.
	\begin{equation*}
	g(\mathbb{E}[x])=\exp\Big(\frac{1}{k}\sum_{j=1}^k{i_j}\Big)\leq \mathbb{E}[g(x)]=\frac{1}{k}\sum_{j=1}^k{\exp(i_j)}
	\end{equation*}
	We obtain $\sum_{j=1}^k{i_j}\leq k\ln\frac{T}{k}$ by taking $\log$ for both sides.
\end{proof}

\begin{lemma}
	\label{lm:boundt}
	Let $T=c\left(\sqrt{\kappa}\ln\frac{1}{\epsilon}+\sqrt{\kappa}(\ln\kappa\ln B)^2\right)$, where $c>2$, then $\sqrt{\kappa}\ln\frac{1}{\epsilon}+\sqrt{\kappa}(\ln\kappa\ln B)\ln\frac{T}{\ln\kappa\ln B} \leq T$ is satisfied.
\end{lemma}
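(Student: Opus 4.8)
The plan is to reduce the claimed inequality to a single-variable estimate and then bound the logarithm by an affine function of its argument. Write $\ell \triangleq \ln\frac{1}{\epsilon}$ and $p \triangleq \ln\kappa\ln B$, so that $T = c\sqrt{\kappa}\,(\ell + p^2)$ and, after dividing the desired inequality through by $\sqrt{\kappa}$, the goal becomes
\[
\ell + p\ln\!\Big(\frac{T}{p}\Big) \;\le\; c\,(\ell + p^2).
\]
Since $\frac{T}{p} = c\sqrt{\kappa}\big(\tfrac{\ell}{p} + p\big)$, we have $\ln\frac{T}{p} = \ln c + \tfrac12\ln\kappa + \ln\!\big(\tfrac{\ell + p^2}{p}\big)$, so the task splits into controlling these three pieces once multiplied by $p$.

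The key device is the elementary inequality $\ln x \le x/e$, valid for all $x>0$ (equality at $x=e$), which I would apply to the last piece: $p\ln\!\big(\tfrac{\ell+p^2}{p}\big)\le \frac{\ell+p^2}{e}$. This is precisely what absorbs the potentially unbounded $\ell$-dependence, because $\tfrac1e < 1 < c$, so the resulting $\tfrac{\ell}{e}$ fits comfortably inside $c\ell$. For the remaining two pieces I would use $\ln c \le \tfrac12 c$ (valid for all $c>0$) and $\tfrac12\ln\kappa \le \tfrac12 p$ (which holds as soon as $\ln B\ge1$), which together give $p\ln\frac{T}{p} \le \tfrac12 cp + \tfrac12 p^2 + \tfrac1e(\ell+p^2)$. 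Adding $\ell$ yields
\[
\ell + p\ln\frac{T}{p} \;\le\; \Big(1+\tfrac1e\Big)\ell + \tfrac12 cp + \Big(\tfrac12+\tfrac1e\Big)p^2,
\]
and it then suffices to check $\big(1+\tfrac1e\big)\ell \le c\ell$ and $\tfrac12 cp + \big(\tfrac12+\tfrac1e\big)p^2 \le cp^2$.

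The first of these is immediate from $c>2>1+\tfrac1e$. The second is where I expect the only real care is needed: it rearranges to $\tfrac12 c \le \big(c - \tfrac12 - \tfrac1e\big)p$, and since $c>2$ forces $c - \tfrac12 - \tfrac1e > \tfrac12 c$, this holds whenever $p\ge 1$; in the setting of Algorithm \ref{alg:guess} the guesses obey $\kappa_i = e^{i+2}$ so $\ln\kappa\ge 3$ and (for any sensible bound) $\ln B\ge 1$, hence $p\ge 3$ comfortably. Thus the main obstacle is purely the bookkeeping of constants together with noting that the side conditions ($\ln B\ge1$ and $p\ge1$, i.e. $\ln\kappa\ln B\ge1$) are harmless — in the complementary degenerate regime $\kappa$ is an absolute constant and the bound $O(\sqrt{\kappa}\ln\frac{1}{\epsilon})$ holds trivially. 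Combining the two checks and multiplying back through by $\sqrt{\kappa}$ recovers $\sqrt{\kappa}\,\ell + \sqrt{\kappa}\,p\ln\frac{T}{p}\le T$, which is exactly the claim.
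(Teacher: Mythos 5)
Your proof is correct, but it takes a genuinely different route from the paper. The paper proves Lemma \ref{lm:boundt} by a three-way case analysis on the relative sizes of $\ln\frac{1}{\epsilon}$, $\ln\kappa\ln B$, and $\ln\ln\frac{1}{\epsilon}$: in each regime it bounds the $T$ inside the logarithm using the case hypothesis (e.g.\ $T\leq 2c\sqrt{\kappa}(\ln\kappa\ln B)^2$ when $\ln\frac{1}{\epsilon}\leq\ln\kappa\ln B$), expands $\ln T$ additively, and absorbs the pieces term by term, concluding that $c\geq 2$ suffices. You instead give a single unified argument: after normalizing by $\sqrt{\kappa}$ and splitting $\ln\frac{T}{p}=\ln c+\tfrac12\ln\kappa+\ln\frac{\ell+p^2}{p}$, the elementary bound $\ln x\leq x/e$ absorbs the $\ell$-dependent piece in one stroke, and the rest is bookkeeping with $\ln c\leq c/2$ and $\tfrac12\ln\kappa\leq\tfrac12 p$. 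This buys you uniformity (no case split), fully explicit constants, and an explicit statement of the mild side conditions you need, namely $\ln B\geq 1$ and $p=\ln\kappa\ln B\geq 1$; note the paper's own case-by-case absorption is no cleaner on this point — e.g.\ its first case needs $\ln 2c\lesssim(c-\tfrac32)\ln\kappa\ln B$, so it too implicitly requires $\ln\kappa\ln B$ bounded below by a small constant — so your explicit hypotheses are not a weakening relative to the paper, and they are harmless in the setting of Algorithm \ref{alg:guess} where $\kappa_i=e^{i+2}$ gives $\ln\kappa\geq 3$. What the paper's case analysis buys in exchange is that the dominant-term structure in each regime (whether $\ln\frac{1}{\epsilon}$ or $(\ln\kappa\ln B)^2$ drives $T$) is visible at a glance, at the cost of repeating essentially the same absorption three times.
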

\begin{proof}
	We divide this proof into three cases.
	\begin{enumerate}
		\item $\ln\frac{1}{\epsilon} \leq \ln\kappa\ln B$.\\
		The left-hand side (LHS) of the constraint inequality in this lemma is deduced as follows:
		\begin{equation*}
		\begin{split}
		\sqrt{\kappa}&\ln\frac{1}{\epsilon}+\sqrt{\kappa}(\ln\kappa\ln B)\ln\frac{T}{\ln\kappa\ln B} \\
		\leq\sqrt{\kappa}&\ln\frac{1}{\epsilon}+\sqrt{\kappa}(\ln\kappa\ln B)\ln\frac{2c\sqrt{\kappa}(\ln\kappa\ln B)^2}{\ln\kappa\ln B} \\
		=\sqrt{\kappa}&\ln\frac{1}{\epsilon} +\sqrt{\kappa}(\ln\kappa\ln B)\left(\ln \sqrt{\kappa} + \ln(\ln\kappa\ln B)+\ln2c\right)\\
		\end{split}
		\end{equation*}
		Hence $c\geq 2$ is enough for satisfying LHS $\leq T$.
		
		\item $\ln\frac{1}{\epsilon} > \ln\kappa\ln B >\ln\ln\frac{1}{\epsilon}$.\\
		We also deduce the LHS of the constraint inequality as follows:
		\begin{equation*}
		\begin{split}
		&\sqrt{\kappa}\ln\frac{1}{\epsilon}+\sqrt{\kappa}(\ln\kappa\ln B)\ln\frac{T}{\ln\kappa\ln B} \\
		\leq&\sqrt{\kappa}\ln\frac{1}{\epsilon}+\sqrt{\kappa}(\ln\kappa\ln B)\ln\frac{c(\sqrt{\kappa}\ln\frac{1}{\epsilon})\left(1+\ln\kappa\ln B\right)}{\ln\kappa\ln B} \\  
		\leq&\sqrt{\kappa}\ln\frac{1}{\epsilon}+\sqrt{\kappa}(\ln\kappa\ln B)\ln\left(2c\sqrt{\kappa}\ln\frac{1}{\epsilon}\right) \\
		=&\sqrt{\kappa}\ln\frac{1}{\epsilon}+\sqrt{\kappa}(\ln\kappa\ln B)\Big(\frac{1}{2}\ln\kappa + \ln\ln\frac{1}{\epsilon}+\ln2c\Big)\\
		\leq&\sqrt{\kappa}\ln\frac{1}{\epsilon}+\sqrt{\kappa}(\ln\kappa\ln B)\Big(\frac{1}{2}\ln\kappa + \ln\kappa\ln B+\ln2c\Big)
		\end{split}
		\end{equation*}
		Hence $c\geq 2$ is also enough for satisfying LHS $\leq T=c\left(\sqrt{\kappa}\ln\frac{1}{\epsilon}+\sqrt{\kappa}(\ln\kappa\ln B)^2\right)$.
		
		\item $\ln\kappa\ln B \leq\ln\ln\frac{1}{\epsilon}$.\\
		We deduce the LHS of the constraint inequality as follows:
		\begin{equation*}
		\begin{split}
		&\sqrt{\kappa}\ln\frac{1}{\epsilon}+\sqrt{\kappa}(\ln\kappa\ln B)\ln\frac{T}{\ln\kappa\ln B} \\
		\leq&\sqrt{\kappa}\ln\frac{1}{\epsilon}+\sqrt{\kappa}(\ln\kappa\ln B)\ln\frac{c\sqrt{\kappa}\left(\ln\frac{1}{\epsilon}+(\ln\ln\frac{1}{\epsilon})^2\right)}{\ln\kappa\ln B} \\
		\leq&\sqrt{\kappa}\ln\frac{1}{\epsilon}+\sqrt{\kappa}(\ln\kappa\ln B)\ln c\sqrt{\kappa}\Big(\ln\frac{1}{\epsilon}+\bigl(\ln\ln\frac{1}{\epsilon}\bigr)^2\Big)\\
		\leq&\sqrt{\kappa}\ln\frac{1}{\epsilon}+\sqrt{\kappa}(\ln\kappa\ln B)\left(\frac{1}{2}\ln\kappa\right) + \sqrt{\kappa}\ln\ln\frac{1}{\epsilon}\left(\ln\ln\frac{1}{\epsilon}+\ln c +2\ln\left(\ln\ln\frac{1}{\epsilon}\right)\right)\\
		\end{split}
		\end{equation*}
		Since $\ln(1/\epsilon)>(\ln\ln\frac{1}{\epsilon})^2$ if $(1/\epsilon)>e^2$. Hence it shows that $c\geq 2$ is enough for satisfying LHS $\leq T=c\left(\sqrt{\kappa}\ln\frac{1}{\epsilon}+\sqrt{\kappa}(\ln\kappa\ln B)^2\right)$.
	\end{enumerate}
\end{proof}

\begin{lemma}
	\label{lm:condnum}
	The condition number $\kappa_i$ (in Line 4 of Algorithm \ref{alg:guess}) is always less than $e^2\kappa$, where $\kappa$ is the true condition number. Equivalently, $i$ (in Line 3 of Algorithm \ref{alg:guess}) is always less than $\ln\kappa$.
\end{lemma}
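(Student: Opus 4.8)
The plan is to show that Algorithm \ref{alg:guess} has already reached accuracy $\epsilon$ before the outer index $i$ reaches $\lceil\ln\kappa\rceil$; since $\kappa_i=e^{i+2}$ (Line 4), the inequality $\kappa_i<e^2\kappa$ is literally the same as $e^i<\kappa$, i.e.\ as $i<\ln\kappa$, so the two forms coincide and it suffices to bound $i$. First I would observe that $\{\kappa_i\}$ grows by the factor $e$ per outer iteration, so there is a smallest index $i^\star$ with $\kappa_{i^\star}\ge e\kappa$; for it $\kappa_{i^\star}=e\,\kappa_{i^\star-1}<e\cdot e\kappa=e^2\kappa$, and hence $i^\star<\ln\kappa$. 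The claim then reduces to showing that the algorithm produces an $\epsilon$-approximate point no later than outer iteration $i^\star$ (an earlier outer iteration may already suffice, which only helps), so that the condition-number guess in use while there is still progress to make never exceeds $\kappa_{i^\star}<e^2\kappa$.

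The core step is to exhibit inside outer iteration $i^\star$ an inner index $j^\star$ whose guess brackets the true parameters. Since the spectrum lies in $[\delta,B\delta]$ we have $\delta\le\mu\le L\le B\delta$, and because $\kappa_{i^\star}\ge e\kappa=eL/\mu$ the admissible window $\{e^{j}\delta:\ L/\kappa_{i^\star}\le e^{j}\delta\le\mu\}$ has multiplicative width $\mu\kappa_{i^\star}/L\ge e$ and lies inside $[\delta,B\delta]$, hence contains a grid point $e^{j^\star}\delta$ with $1\le j^\star\le\ln B$. For this $(i^\star,j^\star)$ one has $[\mu_{i^\star},L_{i^\star}]\supseteq[\mu,L]$, so the Chebyshev step-size and mixing parameters generated from $(\mu_{i^\star},L_{i^\star})$ are valid for the true problem, and Theorem \ref{thm:opt} (with condition number $\kappa_{i^\star}$) guarantees that every batch of $t_i$ Anderson--Chebyshev steps contracts $\|\nabla f\|$ by the factor $2\big(\tfrac{\sqrt{\kappa_{i^\star}}-1}{\sqrt{\kappa_{i^\star}}+1}\big)^{t_i}$ that the \texttt{do-while} test checks. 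Consequently that loop can exit only through the \texttt{break} caused by $t+t_i>T$: it keeps doubling $t_i$ and running, which is far more than the $\sqrt{\kappa_{i^\star}}\ln\tfrac1\epsilon=O(\sqrt{\kappa}\ln\tfrac1\epsilon)\le T$ steps needed to drive $\|\nabla f\|$ below $\epsilon$.

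The last thing to check is that the algorithm actually reaches $(i^\star,j^\star)$ with essentially the full budget left: all iterations spent earlier fall on ``wrong'' guesses $(i,j)$ with $\kappa_i<e\kappa$, for which the Chebyshev polynomial of $[\mu_i,L_i]$ fails to damp some eigenvalue of the true problem, so the ratio test is violated within $O(\sqrt{\kappa_i})$ steps; summing this geometric quantity over $i\le i^\star$ and over the $\ln B$ inner indices gives total waste $O(\sqrt{\kappa}\ln B)$, which is dominated by $T=\Theta(\sqrt{\kappa}\ln\tfrac1\epsilon+\sqrt{\kappa}(\ln\kappa\ln B)^2)$. Hence an $\epsilon$-point is produced at some outer index $i\le i^\star$, so the guessed condition number is $<e^2\kappa$ throughout, i.e.\ $i<\ln\kappa$ (after that point the loop variable may still increase, but no further progress is made and it is irrelevant). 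The main obstacle lies in the middle paragraph: one must verify that the contraction guaranteed by Theorem \ref{thm:opt} genuinely matches the loop test $2\big(\tfrac{\sqrt{\kappa_{i^\star}}-1}{\sqrt{\kappa_{i^\star}}+1}\big)^{t_i}$ with the same exponent $t_i$, because this is exactly what keeps the \texttt{do-while} loop at $(i^\star,j^\star)$ from exiting early and letting $i$ grow further; a secondary but essential point is the existence of $j^\star$, where the slack factor $e$ in the threshold $\kappa_{i^\star}\ge e\kappa$ is precisely what produces the constant $e^2$ (rather than $e$) in the statement.
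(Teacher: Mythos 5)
Your middle paragraph is essentially the paper's own proof: once some guess $(\mu_{i^\star},L_{i^\star})$ with $\kappa_{i^\star}<e^2\kappa$ brackets $[\mu,L]$ (the paper takes $e^c\le\mu\le e^{c+1}$, $e^d\le L\le e^{d+1}$ and the inner index with $\mu_i\le\mu$, which is the same grid argument as your $j^\star$, with slightly different bookkeeping of the factor $e$), Theorem~\ref{thm:opt} makes the do-while test hold on every pass, so that loop can only terminate through the break $t+t_i>T$, i.e.\ the effective end of the run, and the outer index therefore never advances past roughly $\ln\kappa$. The exponent mismatch you flag --- the test uses $2\bigl(\frac{\sqrt{\kappa_i}-1}{\sqrt{\kappa_i}+1}\bigr)^{t_i}$ while the end of the proof of Theorem~\ref{thm:opt} gives exponent $t/2$ --- is real, but the paper's proof of this lemma simply quotes the Theorem~\ref{thm:opt} bound with exponent $t$ and does not address it either, so this is a shared looseness rather than a defect specific to your attempt.

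Where you deviate is the reduction announced in your first paragraph and carried out in your third: you insist that an $\epsilon$-accurate point is produced no later than outer iteration $i^\star$, which forces you to bound the iterations wasted on wrong guesses, and the claim you use for that --- that a guess with $\kappa_i<e\kappa$ must violate the ratio test within $O(\sqrt{\kappa_i})$ steps --- is unsupported; nothing in Theorem~\ref{thm:opt} or elsewhere prevents a mis-bracketed Chebyshev choice from passing the test through many doublings, so that step would not survive scrutiny. Fortunately it is also unnecessary: the lemma only asserts that $i$ stays below $\ln\kappa$ while iterations are actually being performed, and this follows from your middle paragraph alone --- either the budget $T$ is exhausted before $(i^\star,j^\star)$ is reached (the run is effectively over at a smaller $i$), or $(i^\star,j^\star)$ is reached and the do-while there can only exit by exhausting the budget. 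Reaching accuracy $\epsilon$ and accounting for wasted iterations belong to the proof of Theorem~\ref{thm:guess} (via Lemmas~\ref{lm:boundsum} and~\ref{lm:boundt}), not to this lemma; dropping the third paragraph and the reduction to an $\epsilon$-point before $i^\star$ leaves you with exactly the paper's argument.
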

\begin{proof}
	Without loss of generality, let $e^c\leq\mu\leq e^{c+1}$ and $e^d\leq L\leq e^{d+1}$. When $\kappa_i=e^2\kappa$ and $j=e^c$, then $[\mu,L]\subset[\mu_i,L_i]$. According to inequality $\|\nabla f(x_{t+1})\|_2
	\leq 2\big(\frac{\sqrt{\kappa}-1}{\sqrt{\kappa}+1}\big)^t \|\nabla f(x_1)\|_2$ (see the end of the proof of Theorem \ref{thm:opt}), the condition of do-while loop in Line 7--14 of Algorithm \ref{alg:guess} always hold. The only way to break the loop is that the iteration $t>T$, i.e., the end of the algorithm.
\end{proof}

\begin{proofof}{Theorem \ref{thm:guess}}
	According to Lemma \ref{lm:condnum}, $i$ (in Line 3 of Algorithm \ref{alg:guess}) is less than $\ln\kappa$ and $\kappa_i$ is less than $e^2\kappa$. The inner loop $j$ (in Line 5) is obviously less than $\ln B$. Let $k=\ln\kappa\ln B$ and $i_j$ denote the times of the execution of do-while loop (Line 7--14) in each loop iteration (Line 5--16). Thus, the total number of iterations (corresponding to $t$) is $e^{i_j}$ in each loop iteration. These $i_j$ iterations satisfy the do-while condition, i.e., $\frac{\|\nabla f(x_t)\|}{\|\nabla f(x_{t-1})\|}\leq 2\left(\frac{\sqrt{\kappa_i}-1}{\sqrt{\kappa_i}+1}\right)^{t_i}$. We combine the condition together to obtain $\|\nabla f(x_t)\|\leq 2^{i_j}\left(\frac{\sqrt{\kappa_i}-1}{\sqrt{\kappa_i}+1}\right)^{e^{i_j}}\|\nabla f(x_{t-e^{i_j}})\|$. Finally, this guessing algorithm satisfied the following Inequality (\ref{eq:final}).
	
	Note that the Line 15 and 16 of Algorithm \ref{alg:guess} ignore the failed iterations. Also this ignored step can be executed at most once in each loop iteration (Line 5--16). Let $T$ denote the total number of iterations of Algorithm \ref{alg:guess}. Then $\sum_{j=1}^k{e^{i_j}}\leq T\leq 2\sum_{j=1}^k{e^{i_j}} + e\ln\kappa\ln B$.
	\begin{equation}\label{eq:final}
	\|\nabla f(x_t)\|\leq 2^{\sum_{j=1}^k{i_j}}\biggl(\frac{\sqrt{e^2\kappa}-1}{\sqrt{e^2\kappa}+1}\biggr)^{\sum_{j=1}^k{e^{i_j}}}\|\nabla f(x_0)\|
	\end{equation}
	As $\kappa_i$ is less than $e^2\kappa$ and $k=\ln\kappa\ln B$.
	In order to prove the convergence rate, we need the RHS of (\ref{eq:final}) $\leq \epsilon$, it is sufficient to satisfy the following inequality
	\begin{align*}
	\sum_{j=1}^k{i_j} \leq\frac{2}{\sqrt{e^2\kappa}+1}
	\bigg({\sum_{j=1}^k{e^{i_j}}}-\frac{e\sqrt{\kappa}+1}{2}\ln\frac{1}{\epsilon}\bigg),
	\end{align*}
	i.e.,
	\begin{align}
	\frac{e\sqrt{\kappa}+1}{2}\ln\frac{1}{\epsilon} +\frac{e\sqrt{\kappa}+1}{2}\sum_{j=1}^k{i_j}
	\leq \sum_{j=1}^k{e^{i_j}}. \label{eq:last}
	\end{align}
	By applying Lemma \ref{lm:boundsum} and ignoring the constant, we can transform (\ref{eq:last}) to (\ref{eq:idonot}).
	Recall that $\sum_{j=1}^k{e^{i_j}}\leq T\leq 2\sum_{j=1}^k{e^{i_j}} + e\ln\kappa\ln B$ and $k=\ln\kappa\ln B$.
	\begin{equation}\label{eq:idonot}
	\sqrt{\kappa}\ln\frac{1}{\epsilon}+\sqrt{\kappa}(\ln\kappa\ln B)\ln\frac{T}{\ln\kappa\ln B} \leq T.
	\end{equation}
	This is exactly the same as Lemma \ref{lm:boundt}. Thus the proof is finished by using Lemma \ref{lm:boundt}, i.e., $T$ is bounded by $O\left(\sqrt{\kappa}\ln\frac{1}{\epsilon}+\sqrt{\kappa}(\ln\kappa\ln B)^2\right)$.
\end{proofof}

\end{document}